\documentclass[11pt]{amsart}

\usepackage[letterpaper,margin=1in]{geometry}
\usepackage{amsmath,amssymb,amsthm,mathtools}
\usepackage{microtype}
\usepackage[colorlinks=true,linkcolor=black,citecolor=black,urlcolor=black]{hyperref}

\numberwithin{equation}{section}

\theoremstyle{plain}
\newtheorem{theorem}{Theorem}
\newtheorem{lemma}[theorem]{Lemma}
\newtheorem{proposition}[theorem]{Proposition}
\newtheorem{corollary}[theorem]{Corollary}

\theoremstyle{remark}
\newtheorem{remark}[theorem]{Remark}

\newcommand{\ZZ}{\mathbb Z}
\newcommand{\RR}{\mathbb R}
\newcommand{\CC}{\mathbb C}
\newcommand{\Zn}[1]{\mathbb Z_{#1}}

\newcommand{\ch}[1]{\chi_{#1}}
\newcommand{\wh}[1]{\widehat{#1}}
\newcommand{\psin}{\psi_n}

\newcommand{\Apsi}{A_{\psin}}

\newcommand{\Pt}{P_t}

\newcommand{\Lcyc}{L_n}
\newcommand{\coef}{c}

\newcommand{\Ssymbol}{S}
\newcommand{\Qsymbol}{Q}
\newcommand{\Ssq}{\Ssymbol^2}
\newcommand{\Qsq}{\Qsymbol^2}
\newcommand{\Sfour}{\Ssymbol^4}

\newcommand{\TT}{\mathbb T}
\newcommand{\muT}{\pi}
\newcommand{\AT}{A_{\TT}}
\newcommand{\AvgT}[1]{\int_{\TT} #1\,d\muT}
\newcommand{\EntT}[1]{\operatorname{Ent}_{\muT}\!\left(#1\right)}

\newcommand{\AvgSymbol}{\pi}
\newcommand{\Avg}[1]{\AvgSymbol\!\left(#1\right)}

\newcommand{\EntMeasure}{\pi}
\newcommand{\Ent}[1]{\operatorname{Ent}_{\EntMeasure}\!\left(#1\right)}
\newcommand{\Energy}{\mathcal E}
\newcommand{\En}{\Energy_n}
\newcommand{\norm}[1]{\left\|#1\right\|}
\newcommand{\abs}[1]{\left|#1\right|}

\newcommand{\ii}{\mathrm i}

\title{Sharp log-Sobolev inequalities and quartic stability on finite cyclic groups}

\author[X. Xie]{Xinyuan Xie}
\address{(X.X.) Department of Mathematics, University of California, Irvine, CA 92697, USA}
\email{xinyuax7@uci.edu}

\author[H. Zhang]{Haonan Zhang}
\address{(H.Z.) Department of Mathematics, University of South Carolina, Columbia, SC 29208, USA}
\email{haonanzhangmath@gmail.com}

\date{\today}

\hypersetup{
  pdftitle={Sharp log-Sobolev inequalities and quartic stability on finite cyclic groups},
  pdfauthor={Xinyuan Xie and Haonan Zhang},
  pdfsubject={Sharp log-Sobolev inequalities for word-length Laplacians},
  pdfkeywords={log-Sobolev inequalities, hypercontractivity, finite cyclic groups, Poisson semigroups, stability of log-Sobolev inequality}
}

\begin{document}

\begin{abstract}
Let $\mathbb Z_n$ be the cyclic group equipped with the uniform probability
measure $\pi$, and let $A_{\psi_n}$ be the Laplacian with word length
$$
  \psi_n(k) = \min(k,n-k).
$$
For every $n\ge4$, we prove the sharp log-Sobolev inequality
$$
  \text{Ent}_{\pi}(|f|^2)
  \le 2\pi(\bar{f}A_{\psi_n} f),
  \qquad f:\mathbb Z_n \to \mathbb{C},
$$
where $\text{Ent}_{\pi}$ is the relative entropy with respect to $\pi$.  Equivalently, the Poisson semigroup $P_t=e^{-tA_{\psi_n}}$ satisfies the optimal hypercontractivity.
The proof is inspired by the recent work of Frank and Ivanisvili [FI26] on a sharp
log-Sobolev inequality for the nearest-neighbor simple random walk.
Similar arguments yield a simple proof of Weissler's sharp log-Sobolev inequality for the Poisson semigroup on
the circle \cite{Weissler1980}.
The same inequalities were independently obtained by
Yao~\cite{Yao2026} using a different method.

We also prove quantitative stability estimates.
For $n\ge 4$ and $f:\mathbb Z_n\to[0,\infty)$ with
$\pi(f^2)=1$, 
$$
  2\pi(fA_{\psi_n}f)-\operatorname{Ent}_{\pi}(f^2)
  \ge \frac1{12}\|f-1\|_{L^2(\pi)}^4,
$$
with coefficient $1/(12d)$ on products $(\mathbb Z_n)^d$.
The quartic
order and the $d^{-1}$ dependence are optimal.

\end{abstract}

\keywords{Log-Sobolev inequalities, hypercontractivity, finite cyclic groups,
Poisson semigroups, stability of log-Sobolev inequality}

\maketitle

\section{Introduction}

There are two natural translation-invariant dynamics on the discrete circle, or cyclic group,
\(\Zn{n}\).  The first is the nearest-neighbor simple random walk: from a
point, one jumps to either neighbor with equal probability.  The associated
positive Laplacian is
\[
  \Lcyc (f)(k)=f(k)-\frac{1}{2}(f(k+1)+f(k-1)).
\]
With
\[
  \wh f(k):=\Avg{f\,\overline{\ch{k}}}
  =\frac1n\sum_{x\in\Zn{n}}f(x)e^{-2\pi\ii kx/n},
\]
the Fourier expansion is
\[
f=\sum_{k\in \Zn{n}} \wh f(k)\ch{k}, \qquad  \ch{k}(x):=e^{2\pi \ii kx/n},
\]
and $L_n$ is the Fourier multiplier
\[
L_n(f)=\sum_{k\in \Zn{n}}r_n(k)\wh f(k)\ch{k},\qquad r_n(k):=1-\cos(2\pi k/n).
\]

The second can be described by the positive word-length Laplacian $\Apsi$, which is the Fourier
multiplier
\[
  \Apsi f=\sum_{k\in \Zn{n}}\psin(k)\wh f(k)\ch{k},
  \qquad
  \psin(k):=\min(k,n-k).
\]
Here and below, the last formula uses the representative
\(k\in\{0,\ldots,n-1\}\). Put \(m=\lfloor n/2\rfloor\). The corresponding
positive symmetric jump-kernel representation is
\begin{equation}\label{eq:jump-kernel}
  \Apsi f(x)
  =\sum_{r\in\Zn{n}\setminus\{0\}}q_n(r)\bigl(f(x)-f(x+r)\bigr),
  \qquad
  q_n(r):=
  \frac{1-\cos(2\pi mr/n)}{n\bigl(1-\cos(2\pi r/n)\bigr)}\ge0.
\end{equation}
The conditional negativity of this cyclic word length is also established in
\cite[Appendix~B]{JPPP2017}. Consequently, $-\Apsi$ is the generator of a
symmetric Markov semigroup, which we call the Poisson semigroup.
Here, one can think of $\psin$ as the word length function with respect to the symmetric generator set $\{-1,1\}$.

In this paper, we shall consider sharp log-Sobolev inequalities, or equivalently hypercontractivity, in these two cases. For classical log-Sobolev inequalities and hypercontractivity, we refer to
\cite{Bonami1970,Gross1975,Beckner1975,Gross2006} and references therein for background.

We shall use the convention that \(\pi\) denotes the uniform probability measure
on \(\Zn{n}\).  Thus, for all $f$ on $\Zn{n}$,
\[
\Avg{f}
=
\frac1n\sum_{x\in\mathbb Z_n}f(x).
\]
For all nonnegative $f$ on $\Zn{n}$, we define
\[
\Ent{f}
:=
\Avg{f\log f}
-
\Avg{f}
\log \Avg{f}.
\]
We use the convention \(0\log0=0\). For \(1\le p<\infty\), we write
\(\norm{f}_p=\Avg{|f|^p}^{1/p}\). For \(f:\Zn{n}\to\CC\), we define the
Dirichlet form
\[
  \En(f):=\Avg{\overline f\,\Apsi f}
  =\sum_{k=0}^{n-1}\psin(k)\abs{\wh f(k)}^2.
\]
Because the kernel in \eqref{eq:jump-kernel} is nonnegative, the reverse
triangle inequality gives \(\En(|f|)\le\En(f)\). Thus every log-Sobolev
inequality below for complex-valued functions follows from its nonnegative
real-valued form. We therefore prove and state the inequalities in that form
without repeating this reduction. The same modulus-contraction principle
applies to the other Markov forms considered below, including the Poisson form
on the circle.

For the nearest-neighbor cycle, it is enough to consider the log-Sobolev
inequality for nonnegative $f:\Zn{n}\to\RR$:
\begin{equation}\label{lsi1}
    \Ent{f^2}\le c\Avg{f L_n f}.
\end{equation}
The problem of determining the sharp
log-Sobolev constant $c$ goes back to the work of Diaconis and Saloff-Coste on
finite Markov chains \cite{DiaconisSaloffCoste1996}.
After partial progress by Chen and Sheu \cite{ChenSheu2003}, Chen, Liu and
Saloff-Coste \cite{ChenLiuSaloffCoste2008}, and Faust and Fawzi
\cite{FaustFawzi2024}, Frank and Ivanisvili settled the problem for all
\(n\ge4\), proving
that the optimal constant is \(c=2/r_n(1)\)
\cite{FrankIvanisvili2026}. Their key new step is an optimal cubic Sobolev
inequality, which inspired this work.

For the word-length Laplacian, the log-Sobolev inequality likewise reads, for
nonnegative $f:\Zn{n}\to\RR$,
\begin{equation}\label{lsi2}
    \Ent{f^2}\le c\Avg{f \Apsi f}.
\end{equation}
The associated hypercontractivity problem, including several sharp special
cases and exponent ranges, was studied by Beckner, Janson and
Jerison~\cite{BJJ1983}, Andersson~\cite{Andersson2002}, Wolff~\cite{Wolff2007},
and Junge, Palazuelos, Parcet and Perrin~\cite{JPPP2017}.
The optimal hypercontractivity for $n=3$ is special, and the optimal constants for \(\Zn{3}\), for all
\(1<p<q<\infty\), were determined by Cao, Fan, Han, Qiu and
Wang~\cite{CaoFanHanQiuWang2026}. For \(n\ge4\), Yao first settled the
cases \(n=2^r\) and
\(n=3\cdot2^r\)~\cite{Yao2025}, and subsequently the full range
\(n\ge4\)~\cite{Yao2026}.

After normalizing both generators to have spectral gap one, the word-length
inequality~\eqref{lsi2} is stronger than the nearest-neighbor
inequality~\eqref{lsi1}. Indeed, both generators share the same eigenvectors,
the characters $\chi_k$, and their eigenvalues satisfy
\begin{equation}\label{eq:eigenvalue-comparison}
    \ell_n(k):=\frac{r_n(k)}{r_n(1)}\ge \psi_n(k),\qquad 1\le k\le n-1.
\end{equation}
To see this, write \(m=\min\{k,n-k\}\le n/2\), set
\(\theta=2\pi/n\), and let \(F(t)=1-\cos t\).  The identity
\[
  F(a+b)-F(a)-F(b)
  =4\cos\!\left(\frac{a+b}{2}\right)
   \sin\!\left(\frac a2\right)
   \sin\!\left(\frac b2\right)
  \ge0,
  \qquad a,b\ge0,\quad a+b\le\pi,
\]
shows that \(F\) is superadditive on such pairs.  We now prove
\(F(m\theta)\ge mF(\theta)\) by induction on \(m\).  The case \(m=1\)
is immediate.  If \(m\ge2\), then \(m\theta\le\pi\), and the induction
hypothesis gives
\[
  F(m\theta)
  \ge F((m-1)\theta)+F(\theta)
  \ge mF(\theta).
\]
Consequently,
\[
  \frac{r_n(k)}{r_n(1)}
  =\frac{F(m\theta)}{F(\theta)}
  \ge m=\psi_n(k),
\]
where we used \(F(k\theta)=F(m\theta)\).  This proves
\eqref{eq:eigenvalue-comparison}.

\subsection{Log-Sobolev inequalities for the word-length Laplacian on
\texorpdfstring{\(\Zn{n}\)}{Z\_n}}

One of our main results is a new proof of the sharp log-Sobolev inequality for the
word-length Laplacian on \(\mathbb Z_n\) for \(n\ge4\).
Yao obtained the result before us~\cite{Yao2026}. Our proof was carried out
independently of his work and uses a substantially different method.
Our approach is inspired by the work of Frank and Ivanisvili \cite{FrankIvanisvili2026}.

\begin{theorem}\label{thm:main}
For every \(n\ge4\) and every nonnegative \(f:\Zn{n}\to\RR\),
\[
  \Ent{f^2}\le 2\En(f).
\]
The constant \(2\) is sharp.
\end{theorem}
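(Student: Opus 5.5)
Sharpness is the easy half, so I dispose of it first. Take $f=1+\varepsilon g$ with $g=\ch{1}+\ch{-1}=2\cos(2\pi x/n)$, which is real and has $\Avg{g}=0$. As $\varepsilon\to0$ we have $\Ent{f^2}=2\varepsilon^2\Avg{g^2}+O(\varepsilon^3)$, and also $\En(f)=\varepsilon^2\En(g)=\varepsilon^2\Avg{g^2}$, because $\psin(\pm1)=1$ is the spectral gap. Any constant smaller than $2$ therefore fails.

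For the inequality I follow the Frank--Ivanisvili strategy. I reduce the log-Sobolev inequality to a family of moment (Sobolev-type) inequalities, and then prove those by Fourier analysis, using that $\psin$ is the word length.

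\textbf{Step 1 (reduction to $L^2$-normalized $f$ and a Taylor-type comparison).} By homogeneity, assume $\Avg{f^2}=1$. Write $f=a+h$, where $a=\wh f(0)\in[0,1]$ and $\Avg{h}=0$, so that $\Avg{h^2}=1-a^2$. Then $\En(f)=\En(h)\ge\sum_{k\ne0}\psin(k)|\wh h(k)|^2$. The plan is to bound the entropy by the elementary pointwise inequality
\[
u^2\log u^2 \le (u^2-1)+\tfrac{?}{}\,\text{(polynomial in }u-1\text{ of degree}\le 4),
\]
in the spirit of Frank--Ivanisvili. They bound $\Ent{f^2}$ above by a combination of $\Avg{h^2}$, $\Avg{h^3}$ and a correction term, after which the cubic term is controlled by an optimal cubic Sobolev inequality of the form
\[
\Avg{h^3}\le C\,\bigl(\En(h)-\Avg{h^2}\bigr)^{?}\cdots
\]
Concretely, I aim to show $\Ent{f^2}\le 2\Avg{h^2}+\alpha\Avg{h^3}+R$, with $R$ controlled by higher-frequency excess.

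\textbf{Step 2 (cubic Sobolev inequality, the main obstacle).} Expand $\Avg{h^3}=\sum_{j+k+l\equiv0}\wh h(j)\wh h(k)\wh h(l)$. The triangle inequality for word length, $\psin(l)\le\psin(j)+\psin(k)$, forces every triple with nonzero frequencies summing to $0$ to contain at least one frequency with $\psin\ge2$. Hence $\Avg{h^3}$ is bounded by (low-frequency mass)$\times$(high-frequency mass). Applying Cauchy--Schwarz, I aim for
\[
|\Avg{h^3}|\le \beta\,\Avg{h^2}^{1/2}\bigl(\En(h)-\Avg{h^2}\bigr),
\]
with the sharp $\beta$ valid once $n\ge4$. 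For $n=3$ the frequencies $1+1+1\equiv0$ violate this, which explains why $n=3$ is exceptional. The hard part is getting constants good enough to close the argument rather than merely an inequality of the right shape. I would first try weighting by $\psin(k)-1$ and the bound $\sum_{\psin(k)\ge2}|\wh h(k)|^2\le\En(h)-\Avg{h^2}$.

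\textbf{Step 3 (closing).} Combine Steps 1--2 into an inequality in the scalar variables $x=\Avg{h^2}$ and $y=\En(h)-\Avg{h^2}\ge0$, and check $\Ent{f^2}\le 2x+2y$ by one-variable calculus. In the regime where $h$ is large (that is, $a$ small), I would instead use a cruder bound, $\Ent{f^2}\le\log(\max f^2)$ together with $\En\ge\Avg{h^2}\ge 1-a^2$, to cover the remaining range. I expect the gluing between the perturbative regime and the large-deviation regime to need the most careful numerics.
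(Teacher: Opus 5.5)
Your sharpness argument is correct. The proof of the inequality, however, has a genuine gap, and it lies in the key step. The cubic estimate you aim for in Step~2,
\[
  \abs{\Avg{h^3}}\le\beta\,\Avg{h^2}^{1/2}\bigl(\En(h)-\Avg{h^2}\bigr),
\]
puts the high-frequency excess to the wrong power, and it is false for every $\beta$. For $n\ge7$ take $h(x)=2\cos(2\pi x/n)+2\varepsilon\cos(4\pi x/n)$. The only surviving resonances are $1+1=2$ and its conjugate, so $\Avg{h^3}=6\varepsilon$. Meanwhile $\Avg{h^2}=2+2\varepsilon^2$ and $\En(h)-\Avg{h^2}=2\varepsilon^2$, so your right-hand side is $O(\varepsilon^2)$.

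Your own observation, that each resonant triple contains a frequency with $\psin\ge2$, gives one high-frequency factor per triple, not two. The correct shape, which is what the paper proves, is
\[
  \abs{\Avg{u^3}}\le 3\,S^2Q,\qquad S^2=\Avg{u^2},\quad Q^2=\En(u)-\Avg{u^2}.
\]
You do not need a sharp cubic constant, only $\beta\le3$. Deducing $\Avg{u^3}\le(1-a)^2(1+2a)+3Q^2$ from a bound $\beta S^2Q$ for all $Q\ge0$ requires $\beta^2(1+a)^2\le12(1+2a)$ on $[0,1)$, and this forces $\beta\le3$ as $a\to1$.

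Reaching $3$ also means avoiding the $\log m$ loss that comes from summing $1/(\psin(j)-1)$. The paper does this by grouping resonances by the output frequency $k$. The sum $L_k=\sum_{i+j=k}a_ia_j$ has exactly $k-1=\psin(k)-1$ ordered terms, so $L_k^2\le(k-1)U_k$ cancels the Cauchy--Schwarz weight. The sums $U_k=\sum_{i+j=k}a_i^2a_j^2$, together with the analogous wrap-around sums over $i+j=n-k$, run over disjoint sets of ordered pairs, so their total is at most $\Sfour$. The boundary term $a_1a_m^2$ (odd $n$) and the self-conjugate middle frequency (even $n$) are treated separately.

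Steps~1 and~3 also leave the decisive ingredients unspecified. The pointwise bound that makes the argument work is the global cubic majorant
\[
  2t^2\log t\le\tfrac23(t-1)^2(t+2)+(t^2-1),\qquad t\ge0.
\]
With $\Avg{f^2}=1$ and $f=a+u$, it reduces the theorem to $\Avg{u^3}\le(1-a)^2(1+2a)+3Q^2$. That inequality follows from three facts, uniformly in $a\in[0,1]$:
\begin{itemize}
\item the cubic estimate $\abs{\Avg{u^3}}\le 3S^2Q$;
\item the bound $3S^2Q\le\frac34S^4+3Q^2$;
\item the identity $(1-a)^2(1+2a)-\frac34(1-a^2)^2=\frac14(1-a)^3(1+3a)\ge0$.
\end{itemize}
So there is no large-deviation regime to glue. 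Your proposed fallback could not cover one uniformly in $n$ anyway. The function $f=\sqrt n\,\mathbf 1_{\{0\}}$ has $a=n^{-1/2}\to0$ but $\log\max f^2=\log n$, while your argument bounds $2\En(f)$ below only by $2(1-a^2)<2$.
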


Following standard arguments, one obtains the following hypercontractivity
result for the Poisson semigroup \(\Pt=e^{-t\Apsi}\). Sufficiency follows from Gross's theorem
\cite{Gross1975} applied to Theorem \ref{thm:main}; necessity follows from the
standard perturbation.

\begin{corollary}[Hypercontractivity]\label{cor:hypercontractivity}
Fix \(n\ge4\), and let \(\Pt=e^{-t\Apsi}\). For \(1<p\le q<\infty\) and
\(t\ge0\),
\[
  \norm{\Pt f}_{q}\le \norm{f}_{p}\qquad \text{for all }f:\Zn{n}\to\CC
\]
holds if and only if
\[
  t\ge \frac12\log\frac{q-1}{p-1}.
\]
\end{corollary}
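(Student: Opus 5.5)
We derive the corollary from Theorem~\ref{thm:main} in the usual two directions: Gross's theorem for sufficiency, and a perturbation around the constant function for necessity.

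\emph{Sufficiency.} By Theorem~\ref{thm:main} and the modulus-contraction remark in the introduction, $\Ent{|f|^2}\le 2\En(f)$ holds for all $f:\Zn{n}\to\CC$. The semigroup $\Pt$ is a symmetric Markov semigroup by \eqref{eq:jump-kernel}, it is positivity preserving, and $\pi$ is invariant. Gross's theorem \cite{Gross1975} then turns the log-Sobolev inequality with constant $2$ (which is the normalization $\Ent{f^2}\le 2\En(f)$) into $\norm{\Pt f}_q\le\norm{f}_p$ whenever $e^{2t}\ge (q-1)/(p-1)$ and $1<p\le q<\infty$. I will check that the constant convention matches: the standard computation of $\frac{d}{dt}\norm{\Pt f}_{q(t)}$ with $q(t)=1+(p-1)e^{2t}$ produces exactly $\frac{q'}{q^2}\Ent{|g|^q}-\En$-type terms. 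The Stroock--Varopoulos inequality $\Avg{g^{q-1}\Apsi g}\ge \frac{4(q-1)}{q^2}\En(g^{q/2})$ holds for $g\ge0$ for Markov generators with nonnegative kernel, and it reduces the problem to the $L^2$ log-Sobolev inequality. Complex $f$ reduces to $|f|$ because $|\Pt f|\le \Pt|f|$.

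\emph{Necessity.} Suppose $\norm{\Pt f}_q\le\norm f_p$ for all $f$ with $t<\frac12\log\frac{q-1}{p-1}$. Take $f=1+\varepsilon g$ with $g=\Re\ch1=\cos(2\pi x/n)$, which is real, has mean zero, and satisfies $\Apsi g=g$ since $\psin(1)=\psin(n-1)=1$. Then $\Pt f=1+\varepsilon e^{-t}g$. Expanding, $\norm{1+\delta g}_r=1+\frac{r-1}{2}\delta^2\Avg{g^2}+O(\delta^3)$. The inequality therefore forces $(q-1)e^{-2t}\le p-1$, i.e.\ $t\ge\frac12\log\frac{q-1}{p-1}$. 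When $p=q$ the condition $t\ge0$ is trivial and consistent, since $\Pt$ is a contraction on $L^p$.

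The main obstacle is only bookkeeping: making sure Gross's equivalence is invoked with the right constant, so that $\Ent{f^2}\le 2\En(f)$ corresponds to time $\frac12\log\frac{q-1}{p-1}$ rather than $\log$ or a factor of $4$. I will redo the derivative computation once to confirm this.
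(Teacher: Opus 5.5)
Your proposal is correct and follows the route the paper indicates: Gross's theorem applied to Theorem~\ref{thm:main} for sufficiency, and the second-order perturbation $f=1+\varepsilon\cos(2\pi x/n)$ around the constants for necessity. The constant you flagged does work out: with $q(t)=1+(p-1)e^{2t}$ and $g_t=\Pt g$, one has $\frac{d}{dt}\log\norm{\Pt g}_{q(t)}=\bigl[2(q-1)\Ent{g_t^{q}}-q^2\Avg{g_t^{q-1}\Apsi g_t}\bigr]/\bigl(q^2\Avg{g_t^q}\bigr)\le0$ by Stroock--Varopoulos and $\Ent{h^2}\le2\En(h)$ with $h=g_t^{q/2}$, and times beyond the critical one follow from the $L^q$-contractivity of $\Pt$.
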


\begin{remark}
    By a standard tensorization argument, the above log-Sobolev inequality and
    hypercontractivity extend to products of $\mathbb{Z}_n$ with the same
    constants; see, for instance, \cite{Gross2006}.
\end{remark}

Our proof of Theorem \ref{thm:main} uses the cubic-majorant reduction idea of Frank and Ivanisvili, who observed that
\begin{equation}\label{eq:cubic-intro}
    2t^2\log t
  \le
  \frac23(t-1)^2(t+2)+(t^2-1),\qquad t\ge 0.
\end{equation}
See Lemma \ref{lem:cubic-majorant} below. This reduces the problem to a cubic, or third-moment, Sobolev inequality.
However, the final high-frequency estimate in their nearest-neighbor proof
does not adapt directly to the word-length multiplier. In fact, the main
obstacle to a literal adaptation of their argument appears in the proof of Lemma~3 in~\cite{FrankIvanisvili2026}, which uses
\[
\sum_{j=2}^{n-2}\frac{1}{\ell_n(j)-1}<\frac{3}{2}.
\]

For the word-length multiplier, however, $\psin(j)-1=j-1$ when $j<n/2$, so the analogous estimate would contain
\[
  \sum_{j=2}^{m}\frac1{j-1}\simeq\log m,
\]
which produces a logarithmic loss.

Our replacement for this final step is a blockwise third-moment estimate. On the Fourier side, we organize
the proof by conjugate pairs of frequencies \(\{j,-j\}\), except in the even case
where the middle frequency \(n/2\) is self-conjugate.  If \(u:\Zn{n}\to \RR\) has mean zero and
\begin{equation}\label{SQ}
     \Ssq=\Avg{u^2},\qquad \Qsq=\Avg{u\Apsi u}-\Ssq,
     \qquad \Ssymbol,\Qsymbol\ge0,
\end{equation}
then we prove
\[
  \abs{\Avg{u^3}}\le 3\Ssq\Qsymbol.
\]
This third-moment estimate counts only the cubic Fourier resonances that survive averaging.
For odd cyclic groups these resonances are
\[
  i+j=k,
  \qquad
  i+j+k=n,
  \qquad
  1\le i,j,k\le\left\lfloor\frac n2\right\rfloor.
\]
For even cyclic groups the same resonances occur, and one must also handle the
self-conjugate middle frequency \(n/2\). Compared with the proof of Frank and Ivanisvili, the main addition is this blockwise third-moment estimate: it closes the cubic inequality for the linear word-length spectrum without summing the harmonic reciprocal-surplus weights.

\subsection{Log-Sobolev inequality for the Poisson semigroup on the circle}

Similar ideas yield a simple proof of the sharp log-Sobolev inequality for the
Poisson semigroup on the circle. Let \(\TT=\RR/\ZZ\), let \(\muT\) be
normalized Haar measure, and let \(\AT\) be the positive Poisson Laplacian on
\(\TT\), so that \(-\AT\) generates the Poisson semigroup:
\[
  \AT (f)=\sum_{k\in\ZZ}\abs{k}\,\wh f(k)\chi_k,
  \qquad
  \chi_k(x)=e^{2\pi\ii kx},
  \qquad
  \wh f(k)=\AvgT{f\,\overline{\chi_k}},
\]
so that its closed quadratic form is
\[
  \mathcal E_{\TT}(f):=\AvgT{\overline f\,\AT f}
  =\sum_{k\in\ZZ}\abs{k}\,\abs{\wh f(k)}^2,
\]
with form domain
\[
  \mathcal D(\mathcal E_{\TT})
  =\left\{f\in L^2(\TT):
    \sum_{k\in\ZZ}\abs{k}\,\abs{\wh f(k)}^2<\infty\right\}.
\]
For nonnegative \(g\), entropy with respect to \(\muT\) is defined by
\[
  \operatorname{Ent}_{\muT}(g)
  =\AvgT{g\log g}-\AvgT{g}\log\AvgT{g},
\]
again with \(0\log0=0\).

Weissler~\cite{Weissler1980} first proved the sharp log-Sobolev inequality for the Poisson semigroup on the circle.

\begin{theorem}\label{thm:circle}
For every nonnegative \(f\in\mathcal D(\mathcal E_{\TT})\),
\[
  \EntT{f^2}\le 2\mathcal E_{\TT}(f).
\]
The constant \(2\) is sharp.
\end{theorem}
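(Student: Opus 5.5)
Following the scheme described for Theorem~\ref{thm:main}, we will reduce the inequality to a cubic Sobolev inequality on $\TT$ and prove that one by Fourier analysis.

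\textbf{Reduction to trigonometric polynomials.} Normalize $\AvgT{f^2}=1$. Trigonometric polynomials are dense in $\mathcal D(\mathcal E_{\TT})$ for the form norm. Truncating $f$ at level $M$ and then taking Fej\'er means keeps $f\ge0$ and does not increase $\mathcal E_{\TT}$, because Fej\'er means are Fourier multipliers with coefficients in $[0,1]$ and truncation is a normal contraction. Entropy is lower semicontinuous along $L^2$ convergence, by Fatou applied to $f^2\log f^2+e^{-1}$. So it suffices to treat nonnegative trigonometric polynomials $f$.

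\textbf{Cubic majorant.} Write $f=1+u$, after a harmless rescaling, with $u$ real. By Lemma~\ref{lem:cubic-majorant}, i.e.\ \eqref{eq:cubic-intro}, applied pointwise with $t=f\ge 0$,
\[
\EntT{f^2}\le \AvgT{2f^2\log f}-\AvgT{f^2}\log\AvgT{f^2}+\text{(correction)}.
\]
I expect the proof to follow the same route as Theorem~\ref{thm:main}. One normalizes so that $\AvgT f=1$ and writes $f=1+u$ with $\AvgT u=0$. Using $\log x\le x-1$ on the $\log\AvgT{f^2}$ term and integrating the majorant, one must show
\[
\tfrac23\AvgT{u^2(3+u)}+\AvgT{(1+u)^2}-1-\AvgT{(1+u)^2}\log\AvgT{(1+u)^2}\le 2\mathcal E_{\TT}(u).
\]
With $S^2=\AvgT{u^2}$, this reduces to an inequality of the form
\[
\tfrac23\AvgT{u^3}\le 2\bigl(\mathcal E_{\TT}(u)-S^2\bigr)+\text{(a nonnegative term in }S),
\]
exactly as in the finite case. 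The key input is the analogue of the blockwise third-moment bound
\[
\Bigl|\AvgT{u^3}\Bigr|\le 3S^2 Q,\qquad Q^2=\mathcal E_{\TT}(u)-S^2 .
\]
The remaining scalar inequality in $S$ and $Q$ is then an elementary AM--GM step, identical to the one used for $\Zn n$, and I would simply reuse it.

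\textbf{Third-moment estimate on $\TT$ (main step).} Write $u=\sum_{k\ne0}a_k\chi_k$ with $a_{-k}=\overline{a_k}$, and set $b_k=|a_k|$ for $k\ge1$. Then
\[
\AvgT{u^3}=\sum_{i+j+k=0}a_ia_ja_k,
\]
and the only surviving resonances, up to sign and permutation, are $i+j=k$ with $i,j,k\ge1$. This gives
\[
|\AvgT{u^3}|\le 6\sum_{i,j\ge1}b_ib_jb_{i+j}.
\]
We also have $S^2=2\sum b_k^2$ and $Q^2=2\sum (k-1)b_k^2$. On $\TT$ there is no wrap-around resonance $i+j+k=n$ and no middle frequency, so this is the simpler half of the cyclic argument. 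Apply Cauchy--Schwarz in the form
\[
\sum_{i,j}b_ib_jb_{i+j}\le \Bigl(\sum_k \tfrac{b_k^2}{w_k}\Bigr)^{1/2}\Bigl(\sum_k w_k\Bigl(\sum_{i+j=k}b_ib_j\Bigr)^2\Bigr)^{1/2}.
\]
A better route is to bound, for each $k$,
\[
\Bigl(\sum_{i+j=k}b_ib_j\Bigr)^2\le (k-1)\sum_{i+j=k}b_i^2b_j^2,
\]
since there are $k-1$ terms. Then
\[
\sum_k b_k\sum_{i+j=k}b_ib_j\le\Bigl(\sum_k(k-1)b_k^2\Bigr)^{1/2}\Bigl(\sum_{i,j}b_i^2b_j^2\Bigr)^{1/2}=\tfrac{Q}{\sqrt2}\cdot\tfrac{S^2}{2}.
\]
This yields $|\AvgT{u^3}|\le \tfrac{3}{\sqrt2}S^2Q\le 3S^2Q$. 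I expect this to be the point needing care, namely matching constants with the finite-group scalar lemma, but the computation above indicates there is room to spare.

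\textbf{Sharpness.} Take $f=1+\varepsilon\cos(2\pi x)$. Then $\EntT{f^2}=\varepsilon^2+O(\varepsilon^3)$ and $2\mathcal E_{\TT}(f)=\varepsilon^2$, so no constant smaller than $2$ works.
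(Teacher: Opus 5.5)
Your third-moment estimate is the paper's argument in substance (same resonance count, same Cauchy--Schwarz with weight $k-1$ and $L_k^2\le(k-1)U_k$, same constant $3/\sqrt2$). Your density step is essentially the paper's Fej\'er-mean argument; the truncation is unnecessary, since $F_N*f$ is already a nonnegative trigonometric polynomial, but it is harmless.

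The gap is in the reduction to the third-moment estimate. You normalize $\AvgT{f}=1$ and write $f=1+u$. Keeping the logarithm exactly, the majorant then gives
\[
  \EntT{f^2}\le\tfrac23\AvgT{u^3}+3S^2-(1+S^2)\log(1+S^2).
\]
So you need $\tfrac23\AvgT{u^3}\le2Q^2+\phi(S)$, where $\phi(S)=(1+S^2)\log(1+S^2)-S^2$. Using $\log X\ge1-1/X$ here only replaces $\phi$ by $0$, and then $\tfrac23\AvgT{u^3}\le2Q^2$ is false, e.g.\ for $u=2s\cos(2\pi x)+2t\cos(4\pi x)$ with $0<t<s^2$ small.

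With the exact $\phi$, this is still not the finite-case scalar inequality. The function $g(x)=(1+x)\log(1+x)-x$ has $g(0)=g'(0)=0$ and $g''=1/(1+x)<1$, so $\phi(S)=g(S^2)<S^4/2$ for every $S>0$. Hence the chain $\tfrac23\AvgT{u^3}\le2S^2Q\le2Q^2+\phi(S)$ breaks at $Q=S^2/2$. Your sharper constant $3/\sqrt2$ would need $\phi(S)\ge S^4/4$, which fails once $S^2\ge5$. More generally $\phi(S)=O(S^2\log S)$, so no bound $|\AvgT{u^3}|\le CS^2Q$ closes the argument for large $S$. Large $S$ cannot be avoided, since $S^2=\AvgT{f^2}-1$ is unbounded over nonnegative $f$ with $\AvgT{f}=1$. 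The ``room to spare'' in the constant therefore does not help.

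The repair is to use the normalization of Proposition~\ref{prop:block-to-cubic}: take $\AvgT{f^2}=1$, $a=\AvgT{f}\in[0,1]$, and $u=f-a$. Then the $\log\AvgT{f^2}$ term vanishes and $S^2=1-a^2\le1$. The majorant gives $\EntT{f^2}\le\tfrac23\bigl(2-2a^3+\AvgT{u^3}\bigr)$, so it suffices that $\AvgT{u^3}\le(1-a)^2(1+2a)+3Q^2$. This follows from $3S^2Q\le\tfrac34S^4+3Q^2$ together with
\[
(1-a)^2(1+2a)-\tfrac34(1-a^2)^2=\tfrac14(1-a)^3(1+3a)\ge0 .
\]
That last inequality is exactly where $0\le a\le1$, i.e.\ nonnegativity of $f$, keeps $S$ under control. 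With this change your proof coincides with the paper's.
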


This log-Sobolev inequality for the Poisson semigroup can be recovered from
the finite word-length inequalities by a Riemann-sum limit; see
\cite{Yao2026}. Frank and Ivanisvili used the analogous limiting argument for
the nearest-neighbor multiplier~\cite{FrankIvanisvili2026}. Indeed, after
spectral-gap normalization and under the natural identification of fixed
frequencies, the nearest-neighbor multiplier
\[
  \ell_n(k)=\frac{1-\cos(2\pi k/n)}{1-\cos(2\pi/n)}
\]
satisfies \(\ell_n(k)\to k^2\) for each fixed \(k\in\ZZ\), so its
limiting Dirichlet form is \(\sum_{k\in\ZZ}k^2\abs{\wh f(k)}^2\), corresponding
to the heat semigroup. By contrast, the word-length multiplier satisfies
\(\psin(k)\to\abs{k}\) for each fixed \(k\in\ZZ\). Thus the word-length limit
yields the Poisson Dirichlet form
\(\sum_{k\in\ZZ}\abs{k}\,\abs{\wh f(k)}^2\). The corresponding sharp
log-Sobolev inequality for the heat semigroup on \(\TT\) goes back to
Weissler~\cite{Weissler1980} and Rothaus~\cite{Rothaus}.

As in the finite cyclic-group case, the direct third-moment argument extends
to the circle group. We provide the details in Section~\ref{sec:proofs}.

\subsection{Stability of log-Sobolev inequalities}

Our last result is a stability result for the log-Sobolev inequality.
We first state the result for finite cyclic groups. A simple remainder analysis of the cubic-majorant
\eqref{eq:cubic-intro} yields a quartic stability estimate in dimension one, and a tensorization argument gives the higher-dimensional form.

For \(n\ge4\) and \(d\ge1\), write
\[
  G=(\Zn{n})^d,\qquad
  \mu=\pi^{\otimes d},\qquad
  A=A_n^{(d)}:=\sum_{i=1}^d\Apsi^{(i)}.
\]
Here \(\Apsi^{(i)}\) acts as \(\Apsi\) in the \(i\)-th coordinate; that is,
\[
  \bigl(\Apsi^{(i)}h\bigr)(x_1,\ldots,x_d)
  =\left[
    \Apsi h(x_1,\ldots,x_{i-1},\mathord\cdot,x_{i+1},\ldots,x_d)
   \right](x_i).
\]

\begin{theorem}[Quartic stability]\label{thm:stability-quartic}
For every \(n\ge4\) and every \(f:\Zn{n}\to[0,\infty)\) satisfying
\(\Avg{f^2}=1\),
\begin{equation}\label{eq:finite-quartic-stability}
  2\En(f)-\Ent{f^2}
  \ge\frac1{12}\norm{f-1}_2^4.
\end{equation}
More generally, for every \(d\ge1\), every \(f:G\to[0,\infty)\) satisfying
\(\int_G f^2\,d\mu=1\) obeys
\begin{equation}\label{eq:product-quartic-stability}
  2\int_G fAf\,d\mu-\operatorname{Ent}_{\mu}(f^2)
  \ge \frac1{12d}\norm{f-1}_{L^2(\mu)}^4.
\end{equation}
Moreover, the exponent \(4\) in
\eqref{eq:finite-quartic-stability} and the \(d^{-1}\) dependence in
\eqref{eq:product-quartic-stability} are optimal.
\end{theorem}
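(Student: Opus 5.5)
The plan is to run the proof of Theorem~\ref{thm:main} again, but this time to keep the remainder of the cubic majorant \eqref{eq:cubic-intro} instead of discarding it. Set
\[
  R(t):=\frac23(t-1)^2(t+2)+(t^2-1)-2t^2\log t\ge0,\qquad t\ge0 .
\]
With \(\Avg{f^2}=1\) we have \(\Ent{f^2}=\Avg{2f^2\log f}\). Hence
\[
  2\En(f)-\Ent{f^2}
  =\Bigl[2\En(f)-\Avg{\tfrac23(f-1)^2(f+2)+f^2-1}\Bigr]+\Avg{R(f)} .
\]
The bracket is exactly the quantity that the cubic (third-moment) Sobolev step of Theorem~\ref{thm:main} shows to be nonnegative. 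I will reuse that step verbatim, with the decomposition \(f=1+u\) and, if needed, the same normalization or centering used there. So the one-dimensional claim \eqref{eq:finite-quartic-stability} reduces to the scalar estimate
\[
  \Avg{R(f)}\ge\frac1{12}\,\Avg{(f-1)^2}^2 .
\]

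A Taylor expansion at \(t=1+s\) gives \(R(1+s)=\tfrac16 s^4+O(s^5)\), so \(R\) is genuinely quartic at \(1\). However, \(R(t)\sim\tfrac23t^3\) at infinity, so a pointwise bound \(R(t)\ge c(t-1)^4\) fails for large \(t\). I will handle this with a convex minorant plus Jensen's inequality.

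First, observe that \(\Avg{f^2}=1\) forces \(x_0:=\Avg{(f-1)^2}=2-2\Avg f\le2\). Define the convex function
\[
  \varphi(x)=\frac{x^2}{12}\ \text{ for } 0\le x\le2,\qquad \varphi(x)=\frac{x-1}{3}\ \text{ for } x\ge2 .
\]
This is the quadratic continued by its tangent line at \(x=2\). The key elementary step is the one-variable inequality
\[
  R(t)\ge\varphi\bigl((t-1)^2\bigr)\qquad\text{for all } t\ge0 .
\]
Concretely, this means \(R(t)\ge\tfrac1{12}(t-1)^4\) when \(|t-1|\le\sqrt2\), and \(R(t)\ge\tfrac13\bigl((t-1)^2-1\bigr)\) when \(t\ge1+\sqrt2\). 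Once this holds, Jensen gives \(\Avg{R(f)}\ge\varphi(x_0)=x_0^2/12\), which is the desired bound.

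I expect verifying this scalar inequality to be the main obstacle. The constant \(1/12\) leaves slack near \(t=1\), since the local coefficient is \(1/6\), but the margin near \(t=1+\sqrt2\) and on \([0,1]\) (where \(R(0)=1/3\)) must be checked. My approach is to differentiate the difference a few times and reduce it to the sign of a low-degree expression in \(t\) and \(\log t\), falling back on interval checks if necessary. If the constant turns out too tight, I would change where the tangent continuation starts, which is still admissible because \(x_0\le2\).

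For the product statement \eqref{eq:product-quartic-stability}, I will tensorize the deficit. Let \(f_i^2\) denote the conditional expectation of \(f^2\) over the coordinates other than the \(i\)-th, taken in the standard martingale order. Subadditivity of entropy bounds \(\operatorname{Ent}_\mu(f^2)\) by the sum of the one-coordinate conditional entropies, while the Dirichlet form splits as a sum over coordinates. Applying \eqref{eq:finite-quartic-stability} fiberwise, after rescaling each fiber to unit \(L^2\) norm, yields a deficit of at least \(\tfrac1{12}\sum_i\mathbb E\bigl[V_i^2\bigr]\), where \(V_i\) is the fiberwise quantity \(\|g-\|g\|_2\|_2^2\). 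The Efron--Stein-type decomposition
\[
  \norm{f-1}_{L^2(\mu)}^2\le\sum_i\mathbb E V_i ,
\]
or its analogue for the quantity \(2-2\int f\), combined with Cauchy--Schwarz \((\sum_i a_i)^2\le d\sum_i a_i^2\), produces the factor \(1/(12d)\). Making the fiberwise rescaling compatible with this decomposition is the delicate point here.

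For optimality of the exponent, I take \(f=(1+\varepsilon\cos(2\pi x/n))/\text{norm}\). The quadratic term of the deficit vanishes because \(\psin(1)=1\) matches the spectral gap, so the deficit is \(O(\varepsilon^4)\asymp\norm{f-1}_2^4\), and no smaller exponent is possible. For the \(d^{-1}\) dependence, I take the product \(f=\bigotimes_{i=1}^d g_\varepsilon\) of this one-dimensional function. Since both sides of the log-Sobolev inequality are additive under tensor products, the deficit is \(d\cdot O(\varepsilon^4)\). Meanwhile \(\norm{f-1}_2^2\approx d\varepsilon^2/4\) when \(d\varepsilon^2\) is small, so \(\norm{f-1}_2^4\asymp d^2\varepsilon^4\), and the ratio of the deficit to \(\norm{f-1}_2^4\) is of order \(d^{-1}\).
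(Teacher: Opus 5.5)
Your one-dimensional argument is correct and takes a different route from the paper. The paper proves the pointwise bound $R(t)\ge (t-1)^4/(6\max\{1,t^2\})$ by Taylor--Lagrange and then applies Cauchy--Schwarz with $\int\max\{1,f^2\}\le 2$. You instead use a convex minorant in the variable $(t-1)^2$ together with Jensen, and $\|f-1\|_2^2\le 2$ does the work of $\int\max\{1,f^2\}\le2$. The scalar check you were worried about goes through. The formula $R(t)=\frac23(t-1)^4\int_0^1(1-u)^3\,(1+(t-1)u)^{-2}\,du$ shows that $R(t)/(t-1)^4$ is decreasing in $t$, is at least $\frac16$ on $[0,1]$, and is about $0.110$ at $t=1+\sqrt2$. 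For $t\ge1+\sqrt2$, the difference $D(t)=R(t)-\frac13t(t-2)$ satisfies $D(1+\sqrt2)\approx0.107$, $D'(1+\sqrt2)\approx0.203$ and $D''(t)=4(t-1-\log t)-\frac23>0$. The paper's route needs no numerics. Yours leaves some room in the constant: $\frac1{10}$ also passes the same checks with the junction kept at $x=2$. In the exponent example you should also record why the cubic term of the deficit vanishes, namely $\pi(v^3)=0$ for $n\ge4$.

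The genuine gap is in the product case, at the point you yourself flagged as delicate. First, the unnormalized one-dimensional bound applied to a fiber $g$ gives the remainder $V_i^2/(12\|g\|_2^2)$, not $V_i^2/12$. This alone is repairable, since $\mathbb E\|g\|_2^2=1$ and Cauchy--Schwarz give $\mathbb E[V_i^2/\|g\|_2^2]\ge(\mathbb E V_i)^2$.

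The decisive problem is that the Efron--Stein-type inequality $\|f-1\|_{L^2(\mu)}^2\le\sum_i\mathbb E V_i$ is false. Both sides are $2$-homogeneous once the left side is written as $\|f-\|f\|_2\mathbf 1\|_2^2$, so normalization does not matter. Take $d=2$, $n$ even, and let $f(x_1,x_2)$ equal $5$ if $x_1,x_2$ are both even, $0$ if both are odd, and $2$ otherwise.
\begin{itemize}
\item Then $\|f\|_2^2=\frac{33}{4}$ and $\mathbb E f=\frac94$, so $\|f-\|f\|_2\mathbf 1\|_2^2=\frac{33}{2}-\frac94\sqrt{33}\approx3.575$.
\item The fibers in either coordinate are $(5,2)$ and $(2,0)$, each with probability $\frac12$, so $\sum_i\mathbb E V_i=33-7\sqrt{29/2}-2\sqrt2\approx3.516$.
\item With the martingale fibers $f_{i-1}$ instead, the sum is even smaller, about $3.259$.
\end{itemize}
The correct weighting does not rescue fibers of $f$ itself either. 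After Jensen you would need $\sum_i\mathbb E\bigl((\mathbb E_{x_i}f^2)^{1/2}-\mathbb E_{x_i}f\bigr)\ge1-\mathbb E f$. But for $f=g\otimes g$ with $\|g\|_2=1$ and $\mathbb E g=\beta<1$, the left side is $2\beta(1-\beta)<1-\beta^2$.

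The missing idea is the paper's choice of fibers combined with an exact identity.
\begin{itemize}
\item Set $f_i=(\mathbb E_{x_1,\dots,x_i}f^2)^{1/2}$. The chain rule expresses $\operatorname{Ent}_\mu(f^2)$ exactly as the sum over $i$ of averaged $x_i$-entropies of $f_{i-1}^2$.
\item The one-dimensional bound is applied to the $x_i$-slices of $f_{i-1}$. The needed comparison $\mathbb E(f_{i-1}A_{\psi_n}^{(i)}f_{i-1})\le\mathbb E(fA_{\psi_n}^{(i)}f)$ follows from the reverse triangle inequality in $L^2(x_1,\dots,x_{i-1})$ and the nonnegativity of the jump kernel.
\item The identity $\mathbb E_{x_i}(f_{i-1}-f_i)^2=2f_i(f_i-\mathbb E_{x_i}f_{i-1})$ turns each remainder $[\mathbb E_{x_i}(f_{i-1}-f_i)^2]^2/f_i^2$ into $4(f_i-\mathbb E_{x_i}f_{i-1})^2$.
\item The linear parts then telescope exactly: $\sum_i\mathbb E(f_i-\mathbb E_{x_i}f_{i-1})=1-\mathbb E f=\frac12\|f-1\|_2^2$.
\item Jensen and Cauchy--Schwarz in $i$ then bound the sum of remainders below by $\frac1d\|f-1\|_2^4$.
\end{itemize}
So the quantity that telescopes is $V_i/\|g\|_2$ along this martingale, not $V_i$, and no Efron--Stein inequality is needed.
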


\begin{remark}[Unnormalized one-dimensional form]
\label{rem:unnormalized-stability}
Applying \eqref{eq:finite-quartic-stability} to \(f/\norm{f}_2\), we obtain,
for every nonzero \(f:\Zn{n}\to[0,\infty)\),
\begin{equation}\label{eq:finite-quartic-stability-unnormalized}
  2\En(f)-\Ent{f^2}
  \ge
  \frac{\norm{f-\norm{f}_2\mathbf1}_2^4}{12\norm{f}_2^2}.
\end{equation}
\end{remark}

The proof of Theorem~\ref{thm:stability-quartic} is included in Section~\ref{sec:stability}.

\bigskip 

A Lean formalization of Theorem~\ref{thm:main} is available in~\cite{XZlean}, and the authors acknowledge the use
of Codex.

\subsection*{Acknowledgments}
H.Z. is supported by NSF DMS-2453408. The authors acknowledge the use of
OpenAI Codex and ChatGPT 5.5 Pro.

\section{The cubic reduction}

We recall the cubic majorant used in
Frank and Ivanisvili's cubic Sobolev reduction \cite{FrankIvanisvili2026}.

\begin{lemma}[Cubic majorant {\cite{FrankIvanisvili2026}}]\label{lem:cubic-majorant}
For every \(t\ge0\),
\[
  2t^2\log t
  \le
  \frac23(t-1)^2(t+2)+(t^2-1),
\]
where the left side is interpreted as \(0\) at \(t=0\).
\end{lemma}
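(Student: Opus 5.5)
The plan is to reduce the inequality to a one-variable calculus statement and locate the minimum of the difference explicitly. Expanding the cubic, \((t-1)^2(t+2)=t^3-3t+2\), so the right-hand side equals
\[
  \frac23 t^3+t^2-2t+\frac13 .
\]
For \(t\ge0\) define
\[
  h(t):=\frac23 t^3+t^2-2t+\frac13-2t^2\log t ,
\]
with \(h(0)=\frac13\). Since \(t^2\log t\to0\) as \(t\to0^+\), \(h\) is continuous on \([0,\infty)\). It suffices to show \(h\ge0\) there, and I expect the minimum to be \(h(1)=\frac23+1-2+\frac13=0\).

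Next I differentiate on \((0,\infty)\):
\[
  h'(t)=2t^2+2t-2-4t\log t-2t=2t^2-2-4t\log t=2t\,g(t),
  \qquad g(t):=t-\frac1t-2\log t .
\]
Then
\[
  g'(t)=1+\frac1{t^2}-\frac2t=\Bigl(1-\frac1t\Bigr)^2\ge0 ,
\]
so \(g\) is nondecreasing, and \(g(1)=0\). Hence \(g\le0\) on \((0,1]\) and \(g\ge0\) on \([1,\infty)\). Because \(2t>0\), the derivative \(h'\) has the same signs. So \(h\) is nonincreasing on \((0,1]\) and nondecreasing on \([1,\infty)\), which gives \(h(t)\ge h(1)=0\) for all \(t>0\).

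At \(t=0\) we have directly \(h(0)=\frac13>0\), which is consistent with the convention that the left side is \(0\). This completes the argument. There is no real obstacle here. The only point requiring care is factoring \(h'\) as \(2t\,g(t)\), so that a second derivative becomes a perfect square; differentiating \(h'\) directly would instead leave a \(\log t\) term to control.
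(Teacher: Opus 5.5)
Your proof is correct. The expansion of the right-hand side, the factorization $h'(t)=2t\,g(t)$ with $g(t)=t-1/t-2\log t$, and the identity $g'(t)=(1-1/t)^2$ all check out. So $h$ is nonincreasing on $(0,1]$ and nondecreasing on $[1,\infty)$, giving $h\ge h(1)=0$ there, and $h(0)=\tfrac13$ is handled separately.

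The paper does not prove this lemma itself; it is quoted from \cite{FrankIvanisvili2026}. Your $h$ is, however, exactly the function $R$ of Lemma~\ref{lem:quartic-remainder}, and there the paper argues differently. It computes $R''(t)=4(t-1-\log t)$, $R^{(3)}(t)=4-4/t$ and $R^{(4)}(t)=4/t^2$, and notes that $R$ vanishes to third order at $t=1$. The Lagrange form of Taylor's theorem then gives $R(t)=(t-1)^4/(6\xi^2)$ for some $\xi$ between $1$ and $t$. That route recovers the lemma as a by-product. More importantly, it gives the quantitative bound $R(t)\ge (t-1)^4/(6\max\{1,t^2\})$, on which the stability theorem rests. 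Your first-derivative sign analysis is slightly more economical but only yields $h\ge0$.

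One small correction to your closing remark: differentiating $h'$ directly is not harder. Since $h''(t)=4(t-1-\log t)\ge0$ by $\log t\le t-1$, $h$ is convex with $h(1)=h'(1)=0$, and the lemma follows in one line.
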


\begin{proposition}\label{prop:cubic-criterion}
Suppose that for every \(f:\Zn{n}\to[0,\infty)\) with \(\Avg{f^2}=1\),
\begin{equation}\label{eq:cubic-criterion}
  \En(f)\ge \frac13\Avg{(f-1)^2(f+2)}.
\end{equation}
Then
\[
  \Ent{f^2}\le 2\En(f)
\]
for every nonnegative \(f:\Zn{n}\to\RR\).
\end{proposition}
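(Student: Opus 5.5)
The plan is to reduce to the normalized case and then apply the cubic majorant of Lemma~\ref{lem:cubic-majorant} pointwise, averaging the result.

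\emph{Normalization.} Both sides of $\Ent{f^2}\le 2\En(f)$ are $2$-homogeneous in $f$. Indeed, $\Ent{\lambda^2 f^2}=\lambda^2\Ent{f^2}$, since the $\log\lambda^2$ terms cancel, and $\En(\lambda f)=\lambda^2\En(f)$. If $f\equiv0$ there is nothing to prove. Otherwise we may replace $f$ by $f/\norm{f}_2$ and assume $\Avg{f^2}=1$. In that case $\Avg{f^2}\log\Avg{f^2}=0$, so
\[
  \Ent{f^2}=\Avg{f^2\log f^2}=\Avg{2f^2\log f}.
\]

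\emph{Pointwise majorant.} Apply Lemma~\ref{lem:cubic-majorant} with $t=f(x)\ge0$ at each $x\in\Zn{n}$. The convention $0\log 0=0$ matches the lemma's interpretation at $t=0$. This gives
\[
  2f(x)^2\log f(x)\le \frac23(f(x)-1)^2(f(x)+2)+\bigl(f(x)^2-1\bigr).
\]
Averaging against $\pi$ and using $\Avg{f^2-1}=0$, we obtain
\[
  \Ent{f^2}\le \frac23\Avg{(f-1)^2(f+2)}.
\]

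\emph{Conclusion.} The hypothesis \eqref{eq:cubic-criterion} gives $\frac23\Avg{(f-1)^2(f+2)}\le 2\En(f)$, so $\Ent{f^2}\le 2\En(f)$ for the normalized $f$. By homogeneity the inequality then holds for every nonnegative $f$.

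There is no real obstacle here. The only points needing care are the homogeneity of the entropy and the $t=0$ convention, and both are routine.
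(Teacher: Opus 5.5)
Your proposal is correct and follows essentially the same argument as the paper. Both proofs normalize to $\Avg{f^2}=1$ by homogeneity, apply the cubic majorant of Lemma~\ref{lem:cubic-majorant} pointwise, average using $\Avg{f^2-1}=0$, and then invoke the hypothesis \eqref{eq:cubic-criterion}.
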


\begin{proof}
If \(f\equiv0\), the conclusion is immediate. Otherwise, by homogeneity,
normalize \(\Avg{f^2}=1\). Then
\[
  \Ent{f^2}=2\Avg{f^2\log f}.
\]
Applying Lemma \ref{lem:cubic-majorant} pointwise and averaging gives
\[
  \Ent{f^2}
  \le \frac23\Avg{(f-1)^2(f+2)}
  +\Avg{f^2-1}
  =\frac23\Avg{(f-1)^2(f+2)}.
\]
The conclusion follows from \eqref{eq:cubic-criterion}.
\end{proof}

The next proposition isolates the estimate needed for the cubic criterion \eqref{eq:cubic-criterion}.

\begin{proposition}\label{prop:block-to-cubic}
Fix \(n\ge4\).  Assume that every real mean-zero \(u:\Zn{n}\to\RR\)
satisfies
\begin{equation}\label{eq:block-estimate-abstract}
  \abs{\Avg{u^3}}
  \le 3\Ssq\Qsymbol,
  \qquad
  \Ssq=\Avg{u^2},
  \quad
  \Qsq=\En(u)-\Ssq,
  \quad
  \Ssymbol,\Qsymbol\ge0.
\end{equation}
Then \eqref{eq:cubic-criterion} holds on \(\Zn{n}\).
\end{proposition}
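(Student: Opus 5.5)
The plan is to decompose \(f\) into its mean and a mean-zero fluctuation, express both sides of \eqref{eq:cubic-criterion} in terms of the mean and the quantities \(\Ssymbol,\Qsymbol\), and reduce everything to an elementary one-variable polynomial inequality. The hypothesis \eqref{eq:block-estimate-abstract} will be used only to bound the cubic term \(\Avg{u^3}\).

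Fix \(f\ge0\) with \(\Avg{f^2}=1\). Write \(f=a+u\), where \(a=\Avg{f}\) is constant and \(u\) is real with mean zero. Then \(a\ge0\) because \(f\ge0\), and \(a\le1\) by Cauchy--Schwarz. Setting \(\Ssq=\Avg{u^2}\), we get \(a^2+\Ssq=1\).

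Since \(\Apsi\) annihilates constants and is self-adjoint, \(\En(f)=\En(u)\). Because \(\psin(k)\ge1\) for \(k\ne0\) and \(\wh u(0)=0\), we have \(\En(u)\ge\Ssq\). Hence \(\Qsq=\En(u)-\Ssq\ge0\) is well defined and \(\En(f)=\Ssq+\Qsq\).

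Next expand the cubic side using \(\Avg{u}=0\):
\[
\Avg{(f-1)^2(f+2)}=\Avg{f^3}-3a+2,
\qquad
\Avg{f^3}=a^3+3a\Ssq+\Avg{u^3}.
\]
Substituting \(\Ssq=1-a^2\) gives
\[
\Avg{(f-1)^2(f+2)}=2-2a^3+\Avg{u^3}.
\]
The hypothesis \eqref{eq:block-estimate-abstract} gives \(\Avg{u^3}\le3\Ssq\Qsymbol\). So it suffices to show
\[
3\Ssq+3\Qsq-3\Ssq\Qsymbol\ge2-2a^3.
\]

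Minimizing the left side over \(\Qsymbol\ge0\), the minimum occurs at \(\Qsymbol=\Ssq/2\). Thus it is enough to prove
\[
3\Ssq-\tfrac34\Symbol^4\ge 2(1-a^3)
\]
in the form
\[
3\Ssq-\tfrac34\Sfour\ge 2(1-a^3).
\]
Put \(\Ssq=(1-a)(1+a)\) and divide by \(1-a\). The case \(a=1\) is trivial, since then \(u=0\). Multiplying by \(4\), the inequality becomes
\[
12(1+a)-3(1-a)(1+a)^2\ge8(1+a+a^2).
\]
This simplifies to
\[
3a^3-5a^2+a+1=(a-1)^2(3a+1)\ge0,
\]
which holds for \(a\ge0\).

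The only real content is the bookkeeping that turns the cubic functional into \(2-2a^3+\Avg{u^3}\) and the recognition that \(\Qsymbol\) is real. The main point to get right is that the resulting quadratic in \(\Qsymbol\) can be minimized freely and still leave the factorization \((a-1)^2(3a+1)\). I expect no genuine obstacle here; the substantive difficulty of Theorem~\ref{thm:main} lies in proving \eqref{eq:block-estimate-abstract} itself.
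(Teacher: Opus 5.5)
Your proof is correct and follows the paper's argument: the same decomposition \(f=a+u\), the same expansion \(\Avg{(f-1)^2(f+2)}=2-2a^3+\Avg{u^3}\), and minimizing at \(\Qsymbol=\Ssq/2\) is exactly the paper's bound \(3\Ssq\Qsymbol\le\frac34\Sfour+3\Qsq\). Your final factor \((a-1)^2(3a+1)\) equals the paper's \(\frac14(1-a)^3(1+3a)\) after dividing by \(1-a\) and multiplying by \(4\); the undefined macro in your first intermediate display is only a typo for \(\Ssymbol\).
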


\begin{proof}
Let \(f:\Zn{n}\to[0,\infty)\) and normalize \(\Avg{f^2}=1\).  Put
\[
  a=\Avg{f},\qquad u=f-a.
\]
Then \(\Avg{u}=0\), \(0\le a\le1\), and
\[
  \Ssq=\Avg{u^2}=1-a^2,
  \qquad
  \En(f)=\En(u)=\Ssq+\Qsq.
\]
Since \((t-1)^2(t+2)=t^3-3t+2\),
\[
  \Avg{(f-1)^2(f+2)}=\Avg{f^3}-3a+2.
\]
Expanding \(f=a+u\) gives
\[
  \Avg{f^3}=a^3+3a\Ssq+\Avg{u^3}
  =a^3+3a(1-a^2)+\Avg{u^3}.
\]
Hence
\begin{equation}\label{eq:cubic-expansion}
  \Avg{(f-1)^2(f+2)}=2-2a^3+\Avg{u^3}.
\end{equation}
Therefore \eqref{eq:cubic-criterion} is equivalent to
\begin{equation}\label{eq:needed-after-expansion}
  \Avg{u^3}\le (1-a)^2(1+2a)+3\Qsq.
\end{equation}
By \eqref{eq:block-estimate-abstract},
\[
  \Avg{u^3}\le 3\Ssq\Qsymbol.
\]
Also
\[
  3\Ssq\Qsymbol\le \frac34\Sfour+3\Qsq,
\]
and, since \(\Ssq=1-a^2\),
\[
  (1-a)^2(1+2a)-\frac34\Sfour
  =(1-a)^2(1+2a)-\frac34(1-a^2)^2
  =\frac14(1-a)^3(1+3a)\ge0.
\]
Thus \eqref{eq:needed-after-expansion} holds.
\end{proof}

\section{The third-moment estimate for odd cyclic groups}

Assume \(n=2m+1\ge5\).  For a real mean-zero function
\(u:\Zn{n}\to\RR\), write
\[
  u=\sum_{j=1}^{m}\left(\coef_j\ch{j}+\overline{\coef_j}\ch{-j}\right),
\]
where \(\ch{-j}=\ch{n-j}\).
Define
\[
  a_j=\sqrt2\,\abs{\coef_j},\qquad 1\le j\le m.
\]
Then
\[
  \Ssq=\Avg{u^2}=\sum_{j=1}^{m}a_j^2,
  \qquad
  \Qsq=\sum_{j=2}^{m}(j-1)a_j^2,
  \qquad \Ssymbol,\Qsymbol\ge0.
\]

\begin{proposition}[Odd third-moment estimate]\label{thm:odd-block}
For every real mean-zero \(u:\Zn{2m+1}\to\RR\), \(m\ge2\),
\[
  \abs{\Avg{u^3}}
  \le 3\Ssq\Qsymbol.
\]
\end{proposition}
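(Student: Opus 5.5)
The plan is to expand \(\Avg{u^3}\) in Fourier coefficients, keep only the resonant triples, and bound each resonance by a weighted Cauchy--Schwarz. In that step the factor \(\sqrt{k-1}\) is attached to the largest frequency of the triple, so that \(\Qsymbol\) appears exactly once and \(\Ssq\) absorbs the rest.

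\emph{Step 1 (resonances).} Write \(u=\sum_{\ell\ne0}\coef_\ell\ch{\ell}\), with \(\coef_{-j}=\overline{\coef_j}\). Then \(\Avg{u^3}=\sum_{\ell_1+\ell_2+\ell_3\equiv0}\coef_{\ell_1}\coef_{\ell_2}\coef_{\ell_3}\). Group the frequencies by sign and absolute value \(1\le i,j,k\le m\). Since \(|\pm i\pm j\pm k|\le 3m<2n\), the only surviving sign patterns are \(\pm(i+j-k)=0\) and \(\pm(i+j+k-n)=0\). Collecting conjugate terms gives
\[
  \Avg{u^3}=3\,\mathrm{Re}\sum_{i+j=k}\coef_i\coef_j\overline{\coef_k}
  +\mathrm{Re}\sum_{i+j+k=n}\coef_i\coef_j\coef_k,
\]
where both sums run over ordered triples in \(\{1,\dots,m\}^3\). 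Taking absolute values and using \(|\coef_j|=a_j/\sqrt2\), we get
\[
  \abs{\Avg{u^3}}\le \tfrac{3}{2\sqrt2}\,T_1+\tfrac1{2\sqrt2}\,T_2,
  \qquad
  T_1=\sum_{i+j=k}a_ia_ja_k,\quad T_2=\sum_{i+j+k=n}a_ia_ja_k .
\]
The multiplicities need to be double-checked here, since the final constant \(3\) depends on them.

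\emph{Step 2 (the sum \(T_1\)).} Fix \(k\ge2\). The number of ordered pairs \((i,j)\) with \(i+j=k\) is exactly \(k-1\), which is exactly the weight \(\psin(k)-1\) in \(\Qsq\). By Cauchy--Schwarz in the pair,
\[
  \Bigl(\sum_{i+j=k}a_ia_j\Bigr)^2\le (k-1)\sum_{i+j=k}a_i^2a_j^2 .
\]
Applying Cauchy--Schwarz again in \(k\),
\[
  T_1=\sum_k\sqrt{k-1}\,a_k\cdot\frac{\sum_{i+j=k}a_ia_j}{\sqrt{k-1}}
  \le \Qsymbol\Bigl(\sum_k\sum_{i+j=k}a_i^2a_j^2\Bigr)^{1/2}\le \Qsymbol\,\Ssq .
\]
This step is the replacement for the harmonic sum in Frank--Ivanisvili: the count of representations of \(k\) matches its surplus weight exactly, so no logarithm is lost.

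\emph{Step 3 (the sum \(T_2\)).} Order the triple so that \(k\) is its largest entry; then \(k\ge n/3\). The pairs \((i,j)\) with \(i+j=n-k\) and \(i,j\le m\) number at most \(k\), and requiring \(i,j\le k\) reduces the count further, to roughly \(3k-n+1\le k-1\) in the relevant range. The same two-step Cauchy--Schwarz, applied with the largest index carrying \(\sqrt{k-1}\), then gives \(T_2\le C\,\Qsymbol\Ssq\), where \(C\) accounts for the symmetrization over which entry is largest.

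\emph{Step 4 (constants).} It remains to check that
\[
  \tfrac{3}{2\sqrt2}+\tfrac{C}{2\sqrt2}\le 3 .
\]
This leaves a fair amount of room, since \(3/(2\sqrt2)\approx1.06\). If the crude bounds do not close, I will instead combine \(T_1\) and \(T_2\) before applying Cauchy--Schwarz. Concretely, for each largest index \(k\) I will bound the total number of pairs from both resonance families by a multiple of \(k-1\); the families are disjoint, and the \(T_2\) pairs exist only for \(k\ge n/3\).

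The main obstacle I expect is Step 3 together with this bookkeeping. The correct symmetry factors, the counting of pairs in the \(i+j+k=n\) family relative to \(k-1\), and the small case \(m=2\) (where \(k-1\) can equal \(1\) while some triples have the form \(\{1,2,2\}\)) all have to fit within the constant \(3\). I would check \(m=2,3\) by hand first.
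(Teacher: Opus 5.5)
\textbf{There is a genuine gap.} Your main line rests on a miscounted constant in Step~1, and once it is corrected, bounding $T_1$ and $T_2$ separately cannot reach the constant $3$.

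\emph{The multiplicity error.} Every resonance occurs together with its conjugate sign pattern, so the correct expansion is
\[
  \Avg{u^3}=6\operatorname{Re}\sum_{i+j=k}\coef_i\coef_j\overline{\coef_k}+2\operatorname{Re}\sum_{i+j+k=n}\coef_i\coef_j\coef_k .
\]
For example, $u=\sum_{k\ne0}\ch{k}=5\delta_0-1$ on $\Zn{5}$ has $\Avg{u^3}=12$, while your formula gives $6$. The correct starting bound is therefore $\abs{\Avg{u^3}}\le\frac{3}{\sqrt2}T_1+\frac{1}{\sqrt2}T_2$, and the room you count on in Step~4 disappears.

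\emph{Why separate bounds cannot close.} Already for $m=2$ one has $T_1=a_1^2a_2$, $T_2=3a_1a_2^2$ and $\Qsymbol=a_2$. Any constants with $T_1\le C_1\Ssq\Qsymbol$ and $T_2\le C_2\Ssq\Qsymbol$ must satisfy $C_1\ge1$ (your Step~2 is sharp) and $C_2\ge3/2$. Adding them gives at best $\frac{1}{\sqrt2}\bigl(3+\frac32\bigr)\approx3.18>3$. The proposition survives there only because the two suprema occur at different ratios $a_1/a_2$; the true supremum of $\abs{\Avg{u^3}}/(\Ssq\Qsymbol)$ is $\frac34(2+\sqrt2)\approx2.56$.

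\emph{The missing idea.} You need a single Cauchy--Schwarz over both families. It should use that the pair sets $\{i+j=k\}$ (sums $\le m$) and $\{i+j=n-k\}$ (sums $\ge m+1$) are disjoint, so that $\sum_k(U_k+V_k)\le\Sfour$ holds jointly rather than $\Sfour$ per family. This is the paper's argument:
\begin{itemize}
\item it attaches $R_k=\sum_{i+j=n-k}a_ia_j$ to every index $k$, not only the largest, so $R_k$ has $k$ terms and $T_2=\sum_ka_kR_k$ keeps weight $1/\sqrt2$;
\item it bounds $(3L_k+R_k)^2/(k-1)\le(3\sqrt{U_k}+\sqrt2\sqrt{V_k})^2\le11(U_k+V_k)$ for $k\ge2$;
\item it disposes of the lone $k=1$ term $a_1a_m^2/\sqrt2$ using $\Ssq\ge2a_1a_m$ and $\Qsq\ge(m-1)a_m^2$.
\end{itemize}
The total constant is $\sqrt{11/2}+1/(2\sqrt2)<3$.

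\emph{Your Step~4 fallback.} It is this joint idea, but in your largest-index form it has no slack, and as set up it fails at $k=m$. After symmetrizing you get $\abs{\Avg{u^3}}\le\frac{3}{\sqrt2}\sum_k a_k(L_k+R'_k)$, with $R'_k$ summing over ordered pairs $p+q=n-k$, $p,q\le k$. This closes with constant $3$ only if each $k$ carries at most $2(k-1)$ pairs in total. Your count $3k-n+1\le k-1$ is valid for $k\le m-1$, but at $k=m$ there are $m$ such pairs, hence $2m-1$ in total. The bound then becomes $3\sqrt{(2m-1)/(2m-2)}\,\Ssq\Qsymbol>3\Ssq\Qsymbol$, about $3.67\,\Ssq\Qsymbol$ for $m=2$.

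The repair is to note that $(1,m)$ and $(m,1)$ come from the single multiset $\{1,m,m\}$, which has $3$ orderings rather than $6$, so one of them can be dropped. Then every $k$ carries at most $2(k-1)$ pairs, and you obtain exactly $\frac{3}{\sqrt2}\Qsymbol\sqrt{2\Sfour}=3\Ssq\Qsymbol$. This route avoids the paper's separate $k=1$ term but leaves none of the paper's margin. None of this is in the proposal as written.
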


\begin{proof}
If \(\Qsymbol=0\), then \(u\) is supported on the first frequency block.  Since
\(n\ge5\) is odd, no three elements of \(\{1,-1\}\) sum to zero modulo
\(n\).  Hence \(\Avg{u^3}=0\).  Assume \(\Qsymbol>0\).

The zero-frequency cubic resonances are precisely
\[
  i+j=k,
  \qquad
  i+j+k=n,
  \qquad 1\le i,j,k\le m,
\]
together with conjugate sign patterns.  For \(2\le k\le m\), put
\[
  L_k=\sum_{\substack{i+j=k\\1\le i,j\le m}}a_i a_j,
  \qquad L_1=0,
\]
and for \(1\le k\le m\), put
\[
  R_k=\sum_{\substack{i+j=n-k\\1\le i,j\le m}}a_i a_j.
\]
Coefficient counting gives
\begin{equation}\label{eq:odd-resonance-bound}
  \abs{\Avg{u^3}}
  \le \frac1{\sqrt2}\sum_{k=1}^{m}a_k(3L_k+R_k).
\end{equation}
Indeed, a product corresponding to \(i+j=k\) has magnitude
\(a_i a_j a_k/(2\sqrt2)\); the three placements of the negative
frequency and the conjugate sign pattern give the contribution
\(3a_kL_k/\sqrt2\).  For \(i+j+k=n\), the ordered all-positive triples
and the all-negative conjugates give \(a_kR_k/\sqrt2\).

The boundary term \(k=1\) is not covered by the \(\Qsq\)-weight.  Since
\(R_1\) corresponds to \(i+j=2m\), the only ordered pair is \((m,m)\), and
\(R_1=a_m^2\).  Thus
\[
  B_0=\frac1{\sqrt2}a_1R_1
  =\frac1{\sqrt2}a_1a_m^2.
\]
Using
\[
  \Ssq\ge a_1^2+a_m^2\ge2a_1a_m,
  \qquad
  \Qsq\ge(m-1)a_m^2,
\]
we obtain
\begin{equation}\label{eq:odd-boundary}
  B_0\le \frac1{2\sqrt{2(m-1)}}\Ssq\Qsymbol
  \le \frac1{2\sqrt2}\Ssq\Qsymbol.
\end{equation}

It remains to estimate
\[
  B_1=\frac1{\sqrt2}\sum_{k=2}^{m}a_k(3L_k+R_k).
\]
By Cauchy--Schwarz with weight \(k-1\),
\[
  B_1\le \frac1{\sqrt2}\Qsymbol\left(
  \sum_{k=2}^{m}\frac{(3L_k+R_k)^2}{k-1}
  \right)^{1/2}.
\]
We claim that
\begin{equation}\label{eq:odd-W}
  \sum_{k=2}^{m}\frac{(3L_k+R_k)^2}{k-1}
  \le 11\Sfour.
\end{equation}
For \(2\le k\le m\), set
\[
  U_k=\sum_{\substack{i+j=k\\1\le i,j\le m}}a_i^2 a_j^2,
  \qquad
  V_k=\sum_{\substack{i+j=n-k\\1\le i,j\le m}}a_i^2 a_j^2.
\]
The sum \(L_k\) has \(k-1\) ordered terms, while \(R_k\) has \(k\)
ordered terms.  Hence
\[
  L_k^2\le(k-1)U_k,
  \qquad
  R_k^2\le kV_k.
\]
Since \(k/(k-1)\le2\),
\[
  \frac{(3L_k+R_k)^2}{k-1}
  \le \left(3\sqrt{U_k}+\sqrt2\sqrt{V_k}\right)^2
  \le 11(U_k+V_k).
\]
The \(U_k\) terms cover ordered pairs with \(2\le i+j\le m\), and the
\(V_k\) terms cover ordered pairs with \(m+1\le i+j\le2m-1\).  These
ranges are disjoint and contained in all ordered pairs \(1\le i,j\le m\).
Thus
\[
  \sum_{k=2}^{m}(U_k+V_k)
  \le \sum_{i,j=1}^{m}a_i^2 a_j^2=\Sfour,
\]
which proves \eqref{eq:odd-W}.  Therefore
\begin{equation}\label{eq:odd-B1}
  B_1\le \sqrt{\frac{11}{2}}\Ssq\Qsymbol.
\end{equation}
Combining \eqref{eq:odd-resonance-bound}, \eqref{eq:odd-boundary}, and
\eqref{eq:odd-B1},
\[
  \abs{\Avg{u^3}}
  \le \left(\sqrt{\frac{11}{2}}+\frac1{2\sqrt2}\right)\Ssq\Qsymbol
  <3\Ssq\Qsymbol.
\]
\end{proof}

\section{The third-moment estimate for even cyclic groups}

Let \(n=2m\ge4\).  The proof is parallel to the odd case, with one
change: the middle frequency \(m=n/2\) is self-conjugate.  Write a real
mean-zero function as
\[
  u=\sum_{j=1}^{m-1}\left(\coef_j\ch{j}+\overline{\coef_j}\ch{-j}\right)
  +b\ch{m},
  \qquad b\in\RR,
\]
where \(\ch{m}(x)=(-1)^x\). As before, real-valuedness pairs the nonmiddle
frequencies into conjugate pairs. The middle frequency is its own negative modulo \(2m\),
so its coefficient \(b\) is real.  Put
\[
  a_j=\sqrt2\,\abs{\coef_j}\quad (1\le j\le m-1),
  \qquad
  a_m=\abs{b}.
\]
Then
\[
  \Ssq=\sum_{j=1}^{m}a_j^2,
  \qquad
  \Qsq=\sum_{j=2}^{m}(j-1)a_j^2,
  \qquad \Ssymbol,\Qsymbol\ge0.
\]

\begin{proposition}[Even third-moment estimate]\label{thm:even-block}
For every real mean-zero \(u:\Zn{2m}\to\RR\), \(m\ge2\),
\[
  \abs{\Avg{u^3}}
  \le 3\Ssq\Qsymbol.
\]
\end{proposition}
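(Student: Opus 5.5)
We follow the odd case (Proposition~\ref{thm:odd-block}) and expand $\Avg{u^3}$ over the resonant frequency triples. Here a triple $(\alpha,\beta,\gamma)$ of frequencies in $\{\pm1,\ldots,\pm(m-1)\}\cup\{m\}$ is resonant when $\alpha+\beta+\gamma\equiv0 \pmod{2m}$. Writing each frequency by its representative in $\{1,\ldots,m\}$, the resonances again have the form $i+j=k$ or $i+j+k=n=2m$, with $1\le i,j,k\le m$, but now the index $m$ can occur. We make two adjustments.

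First, the middle coefficient has $\abs{b}=a_m$ rather than $a_m/\sqrt2$. Second, a triple involving $m$ has fewer sign patterns, because $\ch{m}=\ch{-m}$. We therefore recount the bound \eqref{eq:odd-resonance-bound}. For a triple with no entry equal to $m$, the count is as before, giving contribution $a_ia_ja_k/\sqrt2$ after symmetrization. For a triple containing $m$ once, say $i+j=m$ or $i+m=k$, the middle factor carries weight $a_m$ in place of $a_m/\sqrt2$, but the sign patterns are halved. This gives at most the same multiplicity $\times\sqrt2$ compared with the odd formula. The extra factor of $\sqrt2$ is the main bookkeeping issue.

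We do this cleanly by proving
\[
  \abs{\Avg{u^3}}\le \frac{1}{\sqrt2}\sum_{k=1}^{m}w_k a_k(3L_k+R_k),
\]
with weights $w_k\in\{1,\sqrt2\}$ absorbing the middle frequency, and then checking that every term carries at most one factor $\sqrt2$. The sums $L_k$ and $R_k$ are defined exactly as before, over $1\le i,j\le m$ with $i+j=k$, respectively $i+j=2m-k$. Triples with two or three entries equal to $m$ force the third entry to be $0$ modulo $2m$, hence $\pm m$. The triple $(m,m,m)$ sums to $3m\not\equiv0 \pmod{2m}$, so it does not resonate, and such triples do not occur.

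We then split the terms as before.

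\emph{Boundary term $k=1$.} Here $R_1$ comes from $i+j=2m-1$, which forces $\{i,j\}=\{m-1,m\}$. This gives $R_1=2a_{m-1}a_m$, a cross term with the middle frequency. For $m\ge3$, we bound
\[
  a_1a_{m-1}a_m\le \tfrac12(a_1^2+a_{m-1}^2)\,a_m,
  \qquad a_m\le \Qsymbol/\sqrt{m-1}.
\]
For $m=2$ (that is, $n=4$), we handle the case directly: $a_1a_1a_2$ resonates via $1+1=2$, and $\Qsq=a_2^2$, so $\abs{\Avg{u^3}}\lesssim a_1^2a_2\le \Ssq\Qsymbol$ with an explicit small constant computed by hand.

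\emph{Terms $k\ge2$.} We apply Cauchy--Schwarz with weight $k-1$ and the same $U_k,V_k$ decomposition. The counts are $L_k^2\le (k-1)U_k$ and $R_k^2\le (k+1)V_k$, where the range $i+j=2m-k$ now has $k+1$ ordered pairs. This gives
\[
  \frac{(3L_k+R_k)^2}{k-1}\le\Bigl(3\sqrt{U_k}+\sqrt{3}\sqrt{V_k}\Bigr)^2\le 12(U_k+V_k),
\]
using $(k+1)/(k-1)\le 3$. The ranges of $i+j$ are again disjoint, so the sum is at most $12\Sfour$.

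\emph{Main obstacle.} Combining gives a constant of roughly $\sqrt{6}$ plus the $\sqrt2$ inflation from the middle weights plus the boundary term, which risks exceeding $3$. We expect this to be the hardest step. The first remedy is to avoid applying the $\sqrt2$ factor globally. The middle frequency enters $L_k$ or $R_k$ only for the single pair index involving $m$. So we separate the terms containing $a_m$ and bound them with the crude estimate $a_m\le\Qsymbol/\sqrt{m-1}$, which gains for large $m$. We check the small cases $m=2,3$ by direct enumeration of resonances. If the constant is still too large, we refine $(k+1)/(k-1)\le3$ to $\le2$ for $k\ge3$, and treat $k=2$ separately.
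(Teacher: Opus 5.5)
Your overall strategy is the same as the paper's: expand over resonances, apply Cauchy--Schwarz with weights $k-1$, and use the $U_k,V_k$ counting. But the proposal has a genuine gap. It never shows that the constant is at most $3$. You say yourself that the combination ``risks exceeding 3'' and only list remedies, yet closing this bound is the whole content of the proposition. There is also no slack. For $n=4$ one has exactly $\pi(u^3)=6b\,\mathrm{Re}(c_1^2)$, so $|\pi(u^3)|/(S^2Q)=3a_1^2/(a_1^2+a_2^2)$ comes arbitrarily close to $3$.

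Your bookkeeping also has errors that cause real losses.
\begin{itemize}
\item Because you allow $m$ as a pair index in $L_k,R_k$, each middle resonance $\{i,m-i,m\}$ is listed several times: under $L_m$, under $R_m$ (since $2m-m=m$), and under $R_i$ and $R_{m-i}$ through the pairs containing $m$. Since $\chi_{-m}=\chi_m$, the conditions $i+j-m\equiv0$ and $i+j+m\equiv0 \pmod{2m}$ describe the same character product. This already overcounts the middle terms by a factor $\sqrt2$, so your weights $w_k=\sqrt2$ are unnecessary, but the overcount is itself a loss.
\item The same choice creates the $k=1$ boundary term $R_1=2a_{m-1}a_m$. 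It charges the resonance $1+(m-1)+m=2m$ to $a_1$, which has zero weight in $Q^2$.
\item Your disjointness claim fails at $k=m$, where $L_m=R_m$ and $U_m=V_m$. After correcting it, your estimates give only $\sum_{k=2}^{m}(3L_k+R_k)^2/(k-1)\le12S^4+4U_m\le16S^4$. The resulting bound is $\bigl(2\sqrt2+1/\sqrt{2(m-1)}\bigr)S^2Q$, which exceeds $3S^2Q$ for every $3\le m\le17$ even with $w_k\equiv1$. So checking $m=2,3$ by hand would not finish the job.
\item A resonance ``$i+m=k$'' with $k\le m$ cannot occur.
\end{itemize}

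The missing idea is to let $\chi_m$ appear only as the distinguished frequency, never inside a pair. Restrict $L_k,R_k$ to pairs with $1\le i,j\le m-1$. Then every resonance involving $\chi_m$ is exactly $i+j=m$. There are three positions for $\chi_m$ and two sign patterns, each of size $a_ia_ja_m/2$, so the exact contribution is $3a_mL_m$:
\[
  |\pi(u^3)|\le\frac1{\sqrt2}\sum_{k=2}^{m-1}a_k(3L_k+R_k)+3a_mL_m .
\]
Now there is no $k=1$ term, and both $L_k$ and $R_k$ have exactly $k-1$ ordered terms. The pair sums in $[2,m-1]$, $\{m\}$ and $[m+1,2m-2]$ partition $\{1,\ldots,m-1\}^2$. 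Apply a single Cauchy--Schwarz with weights $k-1$ for all $2\le k\le m$, including the middle term since $a_m$ carries weight $m-1$ in $Q^2$. This gives $|\pi(u^3)|\le Q\sqrt W$ with $W=\sum_{k=2}^{m-1}\frac{(3L_k+R_k)^2}{2(k-1)}+\frac{9L_m^2}{m-1}$. Using $(3x+y)^2\le10(x^2+y^2)$ and $U_m=\sum_{i+j=m}a_i^2a_j^2$,
\[
  W\le5\Bigl(\sum_{j=1}^{m-1}a_j^2\Bigr)^2+4U_m\le9S^4 .
\]
This yields exactly the constant $3$, uniformly in $m\ge2$ and with no case analysis; it is how the paper proves the proposition. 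Estimating the middle terms separately via $a_m\le Q/\sqrt{m-1}$, rather than inside the same Cauchy--Schwarz, would lose too much for small $m$.
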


\begin{proof}
The case \(\Qsymbol=0\) is again trivial, because then only the first frequency
block is present and no cubic zero-frequency resonance occurs for \(n\ge4\).
Assume \(\Qsymbol>0\).

For \(2\le k\le m-1\), define
\[
  L_k=\sum_{\substack{i+j=k\\1\le i,j\le m-1}}a_i a_j,
  \qquad
  R_k=\sum_{\substack{i+j=2m-k\\1\le i,j\le m-1}}a_i a_j,
\]
and define the middle-frequency sum
\[
  L_m=\sum_{\substack{i+j=m\\1\le i,j\le m-1}}a_i a_j.
\]
Empty sums are understood as zero.  The ordinary and wrap-around resonances give the
same coefficients as in the odd case, while the self-conjugate middle
frequency contributes with coefficient \(3\).  Hence
\begin{equation}\label{eq:even-resonance-bound}
  \abs{\Avg{u^3}}
  \le
  \frac1{\sqrt2}\sum_{k=2}^{m-1}a_k(3L_k+R_k)
  +3a_mL_m.
\end{equation}
By Cauchy--Schwarz, one has
\[
  \abs{\Avg{u^3}}
  \le \Qsymbol\sqrt{W},
\]
where
\[
  W=
  \sum_{k=2}^{m-1}\frac{(3L_k+R_k)^2}{2(k-1)}
  +\frac{9L_m^2}{m-1}.
\]
We prove
\begin{equation}\label{eq:even-W}
  W\le9\Sfour.
\end{equation}
For \(2\le k\le m-1\), let
\[
  U_k=\sum_{\substack{i+j=k\\1\le i,j\le m-1}}a_i^2 a_j^2,
  \qquad
  V_k=\sum_{\substack{i+j=2m-k\\1\le i,j\le m-1}}a_i^2 a_j^2.
\]
Both \(L_k\) and \(R_k\) have \(k-1\) ordered terms, so
\[
  L_k^2\le(k-1)U_k,
  \qquad
  R_k^2\le(k-1)V_k.
\]
Therefore
\[
  \frac{(3L_k+R_k)^2}{2(k-1)}
  \le \frac12\left(3\sqrt{U_k}+\sqrt{V_k}\right)^2
  \le5(U_k+V_k),
\]
using \((3a+b)^2\le10(a^2+b^2)\).  For the middle-frequency term, put
\[
  U_m=\sum_{\substack{i+j=m\\1\le i,j\le m-1}}a_i^2 a_j^2.
\]
Since \(L_m\) has \(m-1\) ordered terms,
\[
  \frac{9L_m^2}{m-1}\le9U_m.
\]
The \(U_k\) terms cover sums \(2\le i+j\le m-1\), the \(U_m\) term covers
sum \(m\), and the \(V_k\) terms cover sums \(m+1\le i+j\le2m-2\).  These
families partition the ordered pairs \(1\le i,j\le m-1\).  Hence
\[
  \sum_{k=2}^{m-1}(U_k+V_k)+U_m
  =\left(\sum_{j=1}^{m-1}a_j^2\right)^2.
\]
Consequently
\[
  W\le
  5\left(\sum_{j=1}^{m-1}a_j^2\right)^2+4U_m
  \le9\left(\sum_{j=1}^{m-1}a_j^2\right)^2
  \le9\Sfour.
\]
This proves \eqref{eq:even-W}, and therefore
\[
  \abs{\Avg{u^3}}
  \le \Qsymbol\sqrt{9\Sfour}=3\Ssq\Qsymbol.
\]
\end{proof}

\section{Proofs of the log-Sobolev inequalities}\label{sec:proofs}

\begin{proof}[Proof of Theorem \ref{thm:main}]
If \(n\ge4\) is odd, then \(n=2m+1\) with \(m\ge2\), and Proposition
\ref{thm:odd-block} verifies the hypothesis of Proposition
\ref{prop:block-to-cubic}.  If \(n\ge4\) is even, then Proposition
\ref{thm:even-block} verifies the same hypothesis.  Thus the cubic
criterion holds for all \(n\ge4\), and Proposition \ref{prop:cubic-criterion}
gives
\[
  \Ent{f^2}\le2\Avg{f\Apsi f}.
\]
The sharpness of the constant is a standard perturbation argument, since the spectral gap is 1. 
\end{proof}

We next give a proof on the circle for completeness.

\begin{proof}[Proof of Theorem \ref{thm:circle}]

We give a direct third-moment estimate. As in
Proposition \ref{prop:block-to-cubic}, it is enough to prove the circle analogue
of \eqref{eq:block-estimate-abstract}.  Let \(u\) be a real mean-zero
trigonometric polynomial and write
\[
  u=\sum_{j\ge1}\left(\coef_j\chi_j+\overline{\coef_j}\chi_{-j}\right),
  \qquad a_j=\sqrt2\,\abs{\coef_j}.
\]
Only finitely many of the \(a_j\)'s are nonzero.  Then
\[
  \Ssq=\AvgT{u^2}=\sum_{j\ge1}a_j^2,
  \qquad
  \Qsq=\AvgT{u\AT u}-\Ssq=\sum_{j\ge2}(j-1)a_j^2,
  \qquad \Ssymbol,\Qsymbol\ge0.
\]
If \(\Qsymbol=0\), only the first frequency block is present and no cubic
zero-frequency resonance occurs, so \(\AvgT{u^3}=0\).  Assume \(\Qsymbol>0\).
The only zero-frequency cubic resonances are \(i+j=k\), together with their
conjugates.  For \(k\ge2\), put
\[
  L_k=\sum_{i+j=k}a_i a_j,
  \qquad
  U_k=\sum_{i+j=k}a_i^2a_j^2,
  \qquad i,j\ge1.
\]
Coefficient counting gives
\[
  \abs{\AvgT{u^3}}
  \le \frac3{\sqrt2}\sum_{k\ge2}a_kL_k.
\]
By Cauchy--Schwarz, we obtain
\[
  \abs{\AvgT{u^3}}
  \le \frac3{\sqrt2}\Qsymbol
  \left(\sum_{k\ge2}\frac{L_k^2}{k-1}\right)^{1/2}.
\]
Since \(L_k\) has \(k-1\) ordered terms,
\[
  L_k^2\le (k-1)U_k.
\]
Therefore
\[
  \sum_{k\ge2}\frac{L_k^2}{k-1}
  \le \sum_{k\ge2}U_k
  =\sum_{i,j\ge1}a_i^2a_j^2=\Sfour.
\]
Thus
\[
  \abs{\AvgT{u^3}}
  \le \frac3{\sqrt2}\Ssq\Qsymbol
  \le 3\Ssq\Qsymbol.
\]
The cubic reduction gives
\(\EntT{f^2}\le2\mathcal E_{\TT}(f)\) for nonnegative trigonometric
polynomials \(f\).

For a general nonnegative \(f\in\mathcal D(\mathcal E_{\TT})\), let
\(F_N\) be the Fej\'er kernel and put \(f_N=F_N*f\). Then \(f_N\) is a
nonnegative trigonometric polynomial,
\[
  f_N\longrightarrow f\quad\text{in }L^2(\TT),
  \qquad
  \mathcal E_{\TT}(f_N)\longrightarrow\mathcal E_{\TT}(f).
\]
In particular, \(f_N^2\to f^2\) in \(L^1(\TT)\). Lower semicontinuity of
entropy therefore gives
\[
  \EntT{f^2}
  \le\liminf_{N\to\infty}\EntT{f_N^2}
  \le2\mathcal E_{\TT}(f). \qedhere
\]
\end{proof}

\section{Uniform quartic stability}\label{sec:stability}

In this section, we prove the stability of the log-Sobolev inequality. 
The main estimate is the following lemma, a quantitative remainder of the cubic majorant.

\begin{lemma}[Quartic remainder]\label{lem:quartic-remainder}
Define
\[
  R(t):=
  \frac23(t-1)^2(t+2)+(t^2-1)-2t^2\log t,
  \qquad t\ge0,
\]
where \(t^2\log t=0\) at \(t=0\).  Then
\begin{equation}\label{eq:quartic-pointwise}
  R(t)\ge
  \frac{(t-1)^4}{6\max\{1,t^2\}},
  \qquad t\ge0.
\end{equation}
Consequently, on any probability space, if \(f\ge0\) and
\(\norm{f}_2=1\), then
\begin{equation}\label{eq:quartic-averaged}
  \int R(f)\ge\frac1{12}\norm{f-1}_2^4.
\end{equation}
\end{lemma}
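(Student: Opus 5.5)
The plan is to obtain \eqref{eq:quartic-pointwise} from an exact Taylor expansion of \(R\) about \(t=1\) with integral remainder, and then to deduce \eqref{eq:quartic-averaged} by a weighted Cauchy--Schwarz argument.

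\emph{Derivatives of \(R\).} Using \((t-1)^2(t+2)=t^3-3t+2\), we have
\[
  R(t)=\tfrac23(t^3-3t+2)+t^2-1-2t^2\log t .
\]
Differentiating repeatedly for \(t>0\) gives
\[
  R'(t)=2t^2-2-4t\log t,\qquad
  R''(t)=4t-4\log t-4,\qquad
  R'''(t)=4-\frac4t,\qquad
  R''''(t)=\frac4{t^2}.
\]
Thus \(R(1)=R'(1)=R''(1)=R'''(1)=0\). Taylor's formula with integral remainder then gives, for every \(t>0\),
\[
  R(t)=\int_1^t\frac{(t-s)^3}{6}\cdot\frac{4}{s^2}\,ds .
\]
The integrand is nonnegative whether \(t\ge1\) or \(t<1\); in the latter case both the orientation of the integral and the sign of \((t-s)^3\) flip.

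\emph{The pointwise bound.} I split into two cases.
\begin{itemize}
\item If \(t\ge1\), then \(s\le t\) on the range of integration, so \(1/s^2\ge 1/t^2\). This gives \(R(t)\ge \frac{4}{t^2}\cdot\frac{(t-1)^4}{24}=\frac{(t-1)^4}{6t^2}\).
\item If \(0<t<1\), then \(s\le1\), so \(1/s^2\ge1\). This gives \(R(t)\ge (t-1)^4/6\).
\end{itemize}
The endpoint \(t=0\) follows by continuity of \(R\), since \(t^2\log t\to0\); directly, \(R(0)=\frac43-1=\frac13\ge\frac16\). Together these cases give \eqref{eq:quartic-pointwise}.

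\emph{The averaged bound.} Write \(M=\max\{1,f^2\}\). By Cauchy--Schwarz,
\[
  \Bigl(\int (f-1)^2\Bigr)^2
  =\Bigl(\int \frac{(f-1)^2}{M^{1/2}}\,M^{1/2}\Bigr)^2
  \le \int\frac{(f-1)^4}{M}\cdot\int M .
\]
Since \(M\le 1+f^2\) and \(\norm f_2=1\), we have \(\int M\le 2\). Combining this with \eqref{eq:quartic-pointwise} gives
\[
  \int R(f)\ge \frac16\int\frac{(f-1)^4}{M}\ge\frac1{12}\norm{f-1}_2^4 .
\]

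The only genuinely delicate point is recognizing that the first three derivatives of \(R\) vanish at \(1\) and that \(R''''\) has the clean form \(4/t^2\). Once this is seen, both cases of the pointwise bound follow from monotonicity of \(1/s^2\), and the averaging step is routine.
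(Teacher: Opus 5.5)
Your proof is correct and follows the paper's argument. Both use the fourth-order Taylor expansion of $R$ at $t=1$ with $R^{(4)}(s)=4/s^2\ge 4/\max\{1,t^2\}$; you use the integral remainder where the paper uses the Lagrange form, which makes no difference. The averaging step is the same Cauchy--Schwarz argument with weight $\max\{1,f^2\}$ and $\int\max\{1,f^2\}\le 2$.
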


\begin{proof}
For \(t>0\),
\[
\begin{aligned}
  R'(t)&=2t^2-4t\log t-2,\\
  R''(t)&=4(t-1-\log t),\\
  R^{(3)}(t)&=4-\frac4t,\\
  R^{(4)}(t)&=\frac4{t^2}.
\end{aligned}
\]
Thus \(R(1)=R'(1)=R''(1)=R^{(3)}(1)=0\).
Taylor's theorem in Lagrange form therefore gives, for every \(t>0\),
a point \(\xi\) between \(1\) and \(t\) such that
\[
  R(t)
  =\frac{R^{(4)}(\xi)}{4!}(t-1)^4
  =\frac{(t-1)^4}{6\xi^2}.
\]
Here one may take \(\xi=1\) when \(t=1\).  Since
\(\xi^2\le\max\{1,t^2\}\), it follows that
\[
  R(t)
  \ge\frac{(t-1)^4}{6\max\{1,t^2\}}.
\]
At \(t=0\), it holds directly since \(R(0)=1/3\ge1/6\).
Moreover, Cauchy--Schwarz gives
\[
  \norm{f-1}_2^4
  \le
  \left(\int
    \frac{(f-1)^4}{\max\{1,f^2\}}\right)
  \left(\int\max\{1,f^2\}\right).
\]
Since
\[
  \int\max\{1,f^2\}\le 1+\int f^2=2,
\]
the pointwise estimate implies \eqref{eq:quartic-averaged}.
\end{proof}

\begin{proof}[Proof of Theorem~\ref{thm:stability-quartic} for \(d=1\)]
The already proved cubic criterion \eqref{eq:cubic-criterion} and the
definition of \(R\) give
\[
\begin{aligned}
  2\En(f)-\Ent{f^2}
  &=
  2\left(
    \En(f)-\frac13\Avg{(f-1)^2(f+2)}
  \right)
  +\Avg{R(f)}\\
  &\ge \Avg{R(f)}
  \ge \frac1{12}\norm{f-1}_2^4.
\end{aligned}
\]

It remains to prove the optimality of the exponent \(4\). This is the sharp
local exponent, although we do not claim that the coefficient
\(1/12\) is optimal.  Fix \(n\ge4\), and let \(v\) be a real
first eigenfunction of \(\Apsi\) normalized by
\[
  \Avg{v}=0,\qquad
  \Avg{v^2}=1,\qquad
  \Apsi v=v.
\]
Since \(v\) has Fourier support in \(\{\pm1\}\), one has
\(\Avg{v^3}=0\).  For sufficiently small \(\abs{\varepsilon}\),
the function
\[
  f_\varepsilon=\frac{1+\varepsilon v}{\sqrt{1+\varepsilon^2}}
\]
is nonnegative and satisfies \(\Avg{f_\varepsilon^2}=1\).  If
\(m_4=\Avg{v^4}\), then expansion at \(\varepsilon=0\) gives
\[
  2\En(f_\varepsilon)-\Ent{f_\varepsilon^2}
  =\frac{3+m_4}{6}\varepsilon^4
  +O(\abs{\varepsilon}^5),
\]
whereas
\[
  \norm{f_\varepsilon-1}_2^2
  =2\left(1-\frac1{\sqrt{1+\varepsilon^2}}\right)
  =\varepsilon^2+O(\varepsilon^4).
\]
Thus the deficit is of exact fourth order in the distance to the constants,
so no estimate of this type can hold with an exponent smaller than \(4\) and a
uniform positive coefficient.
\end{proof}

\begin{proof}[Proof of Theorem~\ref{thm:stability-quartic} for \(d>1\)]
Fix \(n\ge4\) and \(d>1\), and let \(G,\mu,A,f\) be as in the product
statement.  We
write points of \(G\) as \(x=(x_1,\ldots,x_d)\).  If
\(I=\{i_1,\ldots,i_r\}\), let \(\mathbb E_{x_{i_1},\ldots,x_{i_r}}\)
denote averaging in the displayed coordinates, with all other coordinates
held fixed, and put \(\mathbb E=\mathbb E_{x_1,\ldots,x_d}\).  An
expectation over an empty list is understood to be the identity.  Define
\[
  f_i(x_{i+1},\ldots,x_d)
  :=\left(\mathbb E_{x_1,\ldots,x_i}f^2\right)^{1/2},
  \qquad 0\le i\le d,
\]
where \(f_0=f\).  Then
\[
  f_i^2=\mathbb E_{x_i}f_{i-1}^2,
  \qquad f_d=1.
\]

Since \(\mathbb E f^2=1\) and \(f_d=1\), a telescoping sum gives
\begin{equation}\label{eq:product-entropy-telescope}
\begin{aligned}
  \operatorname{Ent}_{\mu}(f^2)
  &=\mathbb E\bigl(f^2\log f^2\bigr)\\
  &=\sum_{i=1}^d\left[
    \mathbb E_{x_i,\ldots,x_d}
      \bigl(f_{i-1}^2\log f_{i-1}^2\bigr)
    -\mathbb E_{x_{i+1},\ldots,x_d}
      \bigl(f_i^2\log f_i^2\bigr)\right]\\
  &=\sum_{i=1}^d\mathbb E_{x_{i+1},\ldots,x_d}
    \left[
      \mathbb E_{x_i}(f_{i-1}^2\log f_{i-1}^2)-(\mathbb E_{x_i}f_{i-1}^2)\log(\mathbb E_{x_i}f_{i-1}^2)\right].
\end{aligned}
\end{equation}
On a slice with \(f_i=0\), the integrand in the last line is defined to be
zero; the entire slice then vanishes.  In this and subsequent displays, the
quotient with denominator \(f_i^2\) is also understood to be zero on such a
slice.  Applying the unnormalized estimate
\eqref{eq:finite-quartic-stability-unnormalized} to every \(x_i\)-slice gives
\begin{equation}\label{eq:product-entropy-upper-bound}
  \operatorname{Ent}_{\mu}(f^2)
  \le 2\sum_{i=1}^d\mathbb E\bigl(f_{i-1}\Apsi^{(i)}f_{i-1}\bigr)
  -\frac1{12}\sum_{i=1}^d\mathbb E
       \frac{\bigl[\mathbb E_{x_i}(f_{i-1}-f_i)^2\bigr]^2}{f_i^2}.
\end{equation}
Indeed, the assertion about a zero slice follows from nonnegativity and
\(\mathbb E_{x_i}f_{i-1}^2=f_i^2\).

For fixed \((x_i,\ldots,x_d)\),
\[
  f_{i-1}(x_i,\ldots,x_d)
  =\norm{f(\mathord\cdot,x_i,\ldots,x_d)}
     _{L^2(x_1,\ldots,x_{i-1})}.
\]
Thus, for fixed \((x_{i+1},\ldots,x_d)\), the reverse triangle inequality
gives
\begin{align*}
  &\abs{f_{i-1}(x_i,x_{i+1},\ldots,x_d)
          -f_{i-1}(y_i,x_{i+1},\ldots,x_d)}^2\\
  &\quad\le
  \mathbb E_{x_1,\ldots,x_{i-1}}
  \abs{f(x_1,\ldots,x_i,\ldots,x_d)
          -f(x_1,\ldots,y_i,\ldots,x_d)}^2.
\end{align*}
Averaging against the jump kernel of \(\Apsi\) in the coordinate \(x_i\)
proves
\[
  \mathbb E\bigl(f\Apsi^{(i)}f\bigr)
  \ge
  \mathbb E_{x_i,\ldots,x_d}
    \bigl(f_{i-1}\Apsi^{(i)}f_{i-1}\bigr).
\]
After summing in \(i\), \eqref{eq:product-entropy-upper-bound} therefore
implies
\begin{equation}\label{eq:product-stronger-remainder}
  2\mathbb E(fAf)-\operatorname{Ent}_{\mu}(f^2)
  \ge \frac1{12}\sum_{i=1}^d
       \mathbb E
       \frac{\bigl[\mathbb E_{x_i}(f_{i-1}-f_i)^2\bigr]^2}{f_i^2}.
\end{equation}

Finally,
\[
  \mathbb E_{x_i}(f_{i-1}-f_i)^2
  =2f_i\bigl(f_i-\mathbb E_{x_i}f_{i-1}\bigr).
\]
This identity also holds when \(f_i=0\), in which case both sides vanish.
Jensen's inequality in the remaining variables, Cauchy--Schwarz in \(i\),
and one telescoping sum give
\begin{equation}\label{eq:product-remainder-telescope}
\begin{aligned}
  \sum_{i=1}^d\mathbb E
    \frac{\bigl[\mathbb E_{x_i}(f_{i-1}-f_i)^2\bigr]^2}{f_i^2}
  &=4\sum_{i=1}^d\mathbb E
    \bigl(f_i-\mathbb E_{x_i}f_{i-1}\bigr)^2\\
  &\ge\frac4d
    \left[\sum_{i=1}^d\mathbb E
      \bigl(f_i-\mathbb E_{x_i}f_{i-1}\bigr)\right]^2\\
  &=\frac4d\bigl(f_d-\mathbb E f\bigr)^2
   =\frac1d\norm{f-1}_{L^2(\mu)}^4.
\end{aligned}
\end{equation}
The last equality uses \(\mathbb E f^2=1\), which gives
\(\norm{f-1}_{L^2(\mu)}^2=2(1-\mathbb E f)\).  Combining
\eqref{eq:product-stronger-remainder} and
\eqref{eq:product-remainder-telescope} proves
\eqref{eq:product-quartic-stability}.

It remains to prove the optimality of the dimensional dependence. The
\(d^{-1}\) dependence in \eqref{eq:product-quartic-stability} is necessary, although
the numerical coefficient \(1/12\) is not asserted to be optimal.
Fix \(n\ge4\) and a first eigenfunction \(v\) as in the \(d=1\) part, and
set
\[
  g_\varepsilon
  =\frac{1+\varepsilon v}{\sqrt{1+\varepsilon^2}},
  \qquad
  F_\varepsilon(z_1,\ldots,z_d)
  =\prod_{i=1}^d g_\varepsilon(z_i).
\]
For sufficiently small \(\abs{\varepsilon}\), the function \(F_\varepsilon\)
is nonnegative and normalized in \(L^2(\pi^{\otimes d})\).
Writing \(m_4=\Avg{v^4}\) and
using the product generator \(A_n^{(d)}\) from
Theorem~\ref{thm:stability-quartic}, tensorization gives
\[
\begin{aligned}
  2\int_{\Zn{n}^d}F_\varepsilon A_n^{(d)}F_\varepsilon
    \,d\pi^{\otimes d}
    -\operatorname{Ent}_{\pi^{\otimes d}}(F_\varepsilon^2)
  &=d\left(2\En(g_\varepsilon)-\Ent{g_\varepsilon^2}\right)\\
  &=d\,\frac{3+m_4}{6}\varepsilon^4
    +O_d(\abs{\varepsilon}^5).
\end{aligned}
\]
On the other hand,
\[
  \norm{F_\varepsilon-1}_2^2
  =2\left(1-(1+\varepsilon^2)^{-d/2}\right)
  =d\varepsilon^2+O_d(\varepsilon^4).
\]
The ratio of the deficit to \(\norm{F_\varepsilon-1}_2^4\) therefore tends to
\((3+m_4)/(6d)\), so no dimension-independent coefficient can hold.
Together with \eqref{eq:product-quartic-stability}, this proves that the
\(d^{-1}\) dependence is optimal.
\end{proof}

\begin{remark}\label{rem:circle-stability}
Similar arguments give a quartic stability estimate for the circle group.
Also, it is possible to improve the numerical constant \(1/12\) using the
relaxed cubic-majorant method employed in~\cite{XieZhangFree2026}, but it
requires more efforts and we omit the discussion here.
\end{remark}


\begin{thebibliography}{JPPP17}

\bibitem[And02]{Andersson2002}
M. E. Andersson,
\emph{Beitrag zur Theorie des Poissonschen Integrals \"uber endlichen Gruppen},
Monatsh. Math. \textbf{134} (2002), no. 3, 177--190.

\bibitem[Bec75]{Beckner1975}
W. Beckner,
\emph{Inequalities in Fourier analysis},
Ann. of Math. (2) \textbf{102} (1975), no. 1, 159--182.


\bibitem[BJJ83]{BJJ1983}
W. Beckner, S. Janson and D. Jerison,
\emph{Convolution inequalities on the circle},
in \emph{Conference on harmonic analysis in honor of Antoni Zygmund}, Vol. I,
II, Wadsworth Math. Ser., Wadsworth, Belmont, CA, 1983, 32--43.


\bibitem[Bon70]{Bonami1970}
A. Bonami,
\emph{\'{E}tude des coefficients de Fourier des fonctions de \(L^p(G)\)},
Ann. Inst. Fourier (Grenoble) \textbf{20} (1970), no. 2, 335--402.

\bibitem[CFHQW26]{CaoFanHanQiuWang2026}
J. Cao, S. Fan, Y. Han, Y. Qiu and Z. Wang,
\emph{The optimal hypercontractive constants for \(\ZZ_3\) and biased
Bernoulli random variables},
arXiv:2602.17248v2, 2026.

\bibitem[CS03]{ChenSheu2003}
G.-Y. Chen and Y.-C. Sheu,
\emph{On the log-Sobolev constant for the simple random walk on the \(n\)-cycle:
the even cases},
J. Funct. Anal. \textbf{202} (2003), no. 2, 473--485.

\bibitem[CLS08]{ChenLiuSaloffCoste2008}
G.-Y. Chen, W.-W. Liu and L. Saloff-Coste,
\emph{The logarithmic Sobolev constant of some finite Markov chains},
Ann. Fac. Sci. Toulouse Math. (6) \textbf{17} (2008), no. 2, 239--290.

\bibitem[DS96]{DiaconisSaloffCoste1996}
P. Diaconis and L. Saloff-Coste,
\emph{Logarithmic Sobolev inequalities for finite Markov chains},
Ann. Appl. Probab. \textbf{6} (1996), no. 3, 695--750.

\bibitem[FF24]{FaustFawzi2024}
O. Faust and H. Fawzi,
\emph{Sum-of-squares proofs of logarithmic Sobolev inequalities on finite Markov chains},
IEEE Trans. Inform. Theory \textbf{70} (2024), no. 2, 803--819.


\bibitem[FI26]{FrankIvanisvili2026}
R. L. Frank and P. Ivanisvili,
\emph{The sharp log Sobolev inequality on finite cycles},
Adv. Math. \textbf{502} (2026), Part B, 111159;
arXiv:2605.29035, doi:10.1016/j.aim.2026.111159.

\bibitem[Gro75]{Gross1975}
L. Gross,
\emph{Logarithmic Sobolev inequalities},
Amer. J. Math. \textbf{97} (1975), no. 4, 1061--1083.

\bibitem[Gro06]{Gross2006}
L. Gross,
\emph{Hypercontractivity, logarithmic Sobolev inequalities, and applications: a
survey of surveys},
in \emph{Diffusion, quantum theory, and radically elementary mathematics},
Math. Notes \textbf{47}, Princeton Univ. Press, Princeton, NJ, 2006, 45--73.



\bibitem[JPPP17]{JPPP2017}
M. Junge, C. Palazuelos, J. Parcet and M. Perrin,
\emph{Hypercontractivity in group von Neumann algebras},
Mem. Amer. Math. Soc. \textbf{249} (2017), no. 1183, xii+83.



\bibitem[Rot80]{Rothaus}
O. S. Rothaus, 
\emph{Logarithmic Sobolev inequalities and the spectrum of Sturm--Liouville operators}, J. Funct.
Anal. \textbf{39} (1980), no. 1, 42--56.



\bibitem[Wei80]{Weissler1980}
F. B. Weissler,
\emph{Logarithmic Sobolev inequalities and hypercontractive estimates on the circle},
J. Funct. Anal. \textbf{37} (1980), no. 2, 218--234.

\bibitem[Wol07]{Wolff2007}
P. Wolff,
\emph{Hypercontractivity of simple random variables},
Studia Math. \textbf{180} (2007), no. 3, 219--236.

\bibitem[XZ26]{XZlean}
X. Xie and H. Zhang,
\emph{Lean verification of the main theorem in ``Sharp log-Sobolev inequalities
and quartic stability on finite cyclic groups''},
GitHub repository, commit \href{https://github.com/XinyuanXie-hub/cyclic-lsi-new/tree/43aee06f11fffe1d543ef57e564b7ff64c14e8f8}{43aee06}, 2026.

\bibitem[XZ26a]{XieZhangFree2026}
X. Xie and H. Zhang,
\emph{Sharp hypercontractivity for free group von Neumann algebras},
arXiv:2606.03423v3, 2026.

\bibitem[Yao25]{Yao2025}
G. Yao,
\emph{Optimal hypercontractivity and log-Sobolev inequalities on cyclic groups
\(\ZZ_{m\cdot2^k}\)},
arXiv:2512.03489, 2025.

\bibitem[Yao26]{Yao2026}
G. Yao,
\emph{A complete solution of optimal hypercontractivity and Log--Sobolev
inequalities on cyclic groups \(\ZZ_n\) for \(n\ge4\)},
arXiv:2605.30867v1, 2026.

\end{thebibliography}
\end{document}